\theoremstyle{plain}% Theorem-like structures provided by amsthm.sty
\newtheorem{thm}{Theorem}[section]
\newtheorem{pro}[thm]{Proposition}
\newtheorem{lem}[thm]{Lemma}
\theoremstyle{definition}
\newtheorem{dfn}[thm]{Definition}
\newtheorem{ntn}[thm]{Notations}
\newtheorem{ex}[thm]{Example}
\newtheorem{rem}[thm]{Remark}
\newtheorem{question}[thm]{Question}
\numberwithin{equation}{section}
\def \c {\mathbb{C}}
\def \q {\mathbb{Q}}
\def \k {\mathbb{K}}
\def \s {\mathbb{S}}
\def \rank {\operatorname{rank}}
\def \cha {\operatorname{char}}
\def \det {\operatorname{det}}
\def \per {\operatorname{perm}}
\def \rk {\operatorname{\mathbf{R}}}
\def \brk {\operatorname{\underline{\mathbf{R}}}}
\def \id {\operatorname{id}}
\def \sgn {\operatorname{sgn}}
\def \SL {\operatorname{SL}}
\begin{document}

\bibliographystyle{plain}

\title{Recursive Koszul Flattenings of Determinant and Permanent Tensors}

\author{Jong In Han}
\address{%Jong In Han, 
School of Mathematics, Korea Institute for Advanced Study (KIAS), 85 Hoegi-ro, Dongdaemun-gu, Seoul 02455, Republic of Korea}
\email{jihan09@kias.re.kr}

\author{Jeong-Hoon Ju}
\address{%Jeong-Hoon Ju, 
Department of Mathematics, Pusan National University, 
	2 Busandaehak-ro 63beon-gil, Geumjeung-gu, 46241 Busan, Republic of Korea}
\email{jjh793012@naver.com}

\author{Yeongrak Kim}
\address{%Yeongrak Kim, 
Department of Mathematics \& Institute of Mathematical Sciences, Pusan National University, 
	2 Busandaehak-ro 63beon-gil, Geumjeung-gu, 46241 Busan, Republic of Korea}
\email{yeongrak.kim@pusan.ac.kr}

\thanks{}

\begin{abstract}
	We investigate new lower bounds on the tensor rank of the determinant and the permanent tensors via recursive usage of the Koszul flattening method introduced by Landsberg-Ottaviani and Hauenstein-Oeding-Ottaviani-Sommese. Our lower bounds on $\mathbf{R} (\det_n)$ completely separate the determinant and the permanent tensors by their tensor ranks. Furthermore, we determine the exact tensor ranks $\mathbf{R} (\det_4) = 12$ and $\mathbf{R} (\per_4) = 8$ over arbitrary field of characteristic $\neq 2$.
	
\end{abstract}

\keywords{tensor rank, Koszul flattening, recursive Koszul flattening,  determinant, permanent}

\subjclass[2020] {Primary 14N07, Secondary 15A15, 15A69}

\maketitle
%

%% main text

% Introduction
\section{Introduction}

The tensor rank $\mathbf{R} (T)$ of a tensor $T$ denotes the smallest number $r$ such that $T$ can be expressed as a sum of $r$ decomposable tensors. This notion generalizes the rank of a matrix, which is a fundamental concept to measure the complexity of a given matrix from algebraic and geometric points of view. One significant difference from classical linear algebra is that determining the rank of a given tensor is very challenging in most cases. For instance, the famous Strassen's algorithm tells us that the tensor rank of a matrix multiplication tensor $M_{\langle 2 \rangle}$ for $2 \times 2$ matrices is at most $7$ \cite{MR248973} (indeed, both the tensor rank and the border rank of $M_{\langle 2 \rangle}$ are exactly $7$ \cite{MR297115, MR2188132, MR3171099}), and hence provides a faster matrix multiplication algorithm for large matrices. His observation stimulates the development of complexity theory, and it is clear that the tensor rank plays a significant role in there, see \cite{MR2865915, MR3729273} for instance.

Among various tensors appearing in linear and multilinear algebra, the determinant and the permanent are particularly interesting thanks to their various applications in algebra, geometry, and combinatorics. Comparing the determinant and the permanent is a long-standing central question in complexity theory. One of the most famous questions is Valiant's conjecture which is an algebraic analogue of the $P \neq NP$ conjecture \cite{MR564634}.  Recall that the determinantal complexity of a polynomial $f$ is the minimal size of an affine linear matrix $M$ so that $f = \det (M)$. Roughly speaking, the conjecture predicts that the determinantal complexity of the permanent polynomial of degree $n$ grows faster than any polynomial in $n$. Note that when $f$ is homogeneous, there is another notion of complexity that comes from determinantal representations, called the Ulrich complexity, which denotes the minimal exponent $r$ such that $f^r$ can be expressed as the determinant of a homogeneous linear matrix \cite{MR3724216}. These questions expect that the determinant and the permanent polynomials can be distinguished by various notions in complexity, and hence we often call them ``determinant versus permanent'' problems. We refer to \cite{MR1440179,MR1861288, MR2861717} for more details on such problems together with their roles in complexity theory.

Note that Valiant's conjecture and similar $VP \neq VNP$ type questions compare the determinant and permanent of an $n \times n$ matrix in terms of polynomials of degree $n$ in $n^2$ variables. On the other hand, the main objects in this paper are the determinant and permanent tensors of order $n$, and we focus on comparing their tensor ranks which is also a mysterious problem. Let $V \cong \mathbb{K}^n$ be an $n$-dimensional vector space over a field $\mathbb{K}$, and let $\{e_1, \cdots, e_n \}$ be the standard basis for $V$. Recall that the determinant tensor is defined as the signed sum of the tensor products of permutations of basis vectors 
\begin{equation*}
	\det_n=\sum_{\sigma \in \mathfrak{S}_n} sgn(\sigma) ~ e_{\sigma(1)} \otimes e_{\sigma(2)} \otimes \cdots \otimes e_{\sigma(n)},
\end{equation*}
and the permanent tensor is defined as the unsigned sum
\begin{equation*}
	\per_n=\sum_{\sigma \in \mathfrak{S}_n}e_{\sigma(1)} \otimes e_{\sigma(2)} \otimes \cdots \otimes e_{\sigma(n)}
\end{equation*}
where $\mathfrak{S}_n$ denotes the symmetric group on $n$ letters. There are several attempts at the study of tensor ranks of $\det_n$ and $\per_n$, particularly focusing on lower bounds of $\mathbf{R} (\det_n)$ and $\mathbf{R} (\per_n)$ \cite{MR3494510, MR4284788, MR4822414}. 

The main tool we use in this paper is a generalization of the Koszul flattening, introduced by Hauenstein-Oeding-Ottaviani-Sommese \cite{MR3987862} to study the identifiability of a generic tensor, which we call the \emph{recursive Koszul flattening} in this paper. The original Koszul flattening method suggested by Landsberg and Ottaviani \cite{MR3376667} is particularly effective for tensors of order $3$, whereas the recursive Koszul flattening method aims for tensors of order $4$ or higher. Applying this method for $\det_n$ and $\per_n$, we obtain much stronger lower bounds of $\mathbf{R} (\det_n)$ for every $n$ and $\mathbf{R} (\per_n)$ for $n \le 7$. The following tables exhibit recent achievements on lower bounds of $\mathbf{R} (\det_n)$ and $\mathbf{R} (\per_n)$ over a field $\mathbb{K}$ of characteristic $0$.

\begin{table}[h!]
	\centering
	\begin{tabular}{ |c|c|c|c|c|c|c|c|c| } 
		\hline 
		$n$   & 4 & 5 & 6 & 7  & $\cdots$ \\ 
		\hline 
		Derksen \cite{MR3494510}  & 6 & 10 & 20 & 35 & $\cdots$\\ 
		Houston et al. \cite{MR4822414} & 7 & 11 & 21 & 36 & $\cdots$\\ 
		Derksen-Teitler \cite{MR3390032} &  7 &  12 &  21 & 40 & $\cdots$\\ 
		Krishna-Makam \cite{MR4284788} & -  & 17 & - & 62 & -\\
		Theorem \ref{thm:DetLower} & 11 & 27 & 65 & 164 & $\cdots$\\ 
		\hline
	\end{tabular}
	\vspace{5pt}
	\caption{Comparison of lower bounds for tensor rank of $\operatorname{det}_n$}
	\label{table1}
\end{table}

\begin{table}[h!]
	\centering
	\begin{tabular}{ |c|c|c|c|c|c|c|c|c| } 
		\hline 
		$n$   & 4 & 5 & 6 & 7  & $\cdots$ \\ 
		\hline 
		Shafiei \cite{MR3316987} & 5 & 8 & 15 & 27 & $\cdots$\\ 
		Derksen \cite{MR3494510} & 6 & 10 & 20 & 35 & $\cdots$\\ 
		Houston et al. \cite{MR4822414} & 7 & 11 & 21 & 36 & $\cdots$\\ 
		Krishna-Makam \cite{MR4284788} & - & 13 & - & 42 & -\\
		Theorem \ref{thm:PermLower} & 8 & 15 & 29 & 55 & -\\ 
		\hline
	\end{tabular}
	\vspace{5pt}
	\caption{Comparison of lower bounds for tensor rank of $\operatorname{perm}_n$}
	\label{table1}
\end{table}

In particular, we show that the border rank of $\det_n$ is at least $\frac{n^{n-1}}{(n-1)!}$ (Theorem \ref{thm:DetLower}) using the $\operatorname{SL}_n$-representation theory. From this observation, we see that $\det_n$ and $\per_n$ can be completely separated by their tensor ranks
\[
\mathbf{R} (\per_n) \le 2^{n-1} < \frac{n^{n-1}}{(n-1)!} \le \underline{\mathbf{R}} (\det_n) \le \mathbf{R} (\det_n)
\]
for every $n \ge 3$, where the leftmost inequality is from \cite{MR2673027}. Moreover, we also verify that the exact tensor rank of $\det_4$ is $12$ (Theorem \ref{thm:TensorRankDet4}) as a combination of the recursive Koszul flattening method and an explicit formula for $\det_4$ \cite[Theorem 4.1]{MR4847254} which gives an upper bound on $\mathbf{R}(\det_4)$.

The structure of this paper is as follows. In Section \ref{Sect:Preliminaries}, we recall definitions for the tensor rank and the border rank of a tensor, and backgrounds on representation theory. In Section \ref{Sect:Recursive Koszul Flattening}, we review our key ingredient, the recursive Koszul flattening method suggested by Hauenstein-Oeding-Ottaviani-Sommese. In Section \ref{Sect:Determinant Tensors}, we give a new lower bound of $\underline{\mathbf{R}}(\operatorname{det}_n)$ for every $n$. Furthermore, we determine the exact tensor rank of $\det_4$ as $\mathbf{R}(\operatorname{det}_4)=12$ over an arbitrary field of characteristic $\neq 2$. In Section \ref{Sect:Permanent Tensors}, we address experimental results on lower bounds of $\brk(\per_n)$ for $n \le 7$. In particular, we show that $\underline{\mathbf{R}}(\per_4)=\mathbf{R}(\per_4)=8$ over an arbitrary field of characteristic $\neq 2$.

%--------------------------------------------------------------

%--------------------------------------------------------------
% Acknowledgement
\section*{Acknowledgement}
J.I. H. was partially supported by the National Research Foundation of Korea (NRF) grant funded by the Korea government (MSIT) (No. RS-2024-00352592) when he was at Korea Advanced Institute of Science and Technology (KAIST). J.I. H. is currently supported by a KIAS Individual Grant (MG101401) at Korea Institute for Advanced Study.
J.-H. J. and Y. K. are supported by the Basic Science Program of the NRF of Korea (NRF-2022R1C1C1010052) and by the Basic Research Laboratory (grant MSIT no. RS-202400414849). 
The authors thank Luke Oeding and Joseph M. Landsberg for helpful comments and discussion. J.-H. J. participated the introductory school of AGATES in Warsaw (Poland) and thanks the organizers for providing a good research environment throughout the school. Y. K. thanks Eberhard Karls Universit\"at T\"ubingen and Universit\"at Konstanz for their kind hospitality during his visit. He also thanks Daniele Agostini, Kangjin Han, Yoosik Kim, and Mateusz Michalek for helpful discussion and suggestions.

%--------------------------------------------------------------

\section{Preliminaries}\label{Sect:Preliminaries}

In this section, we introduce notations and review some definitions and properties that we need throughout the paper, mostly following \cite{MR2865915, MR1153249}.

\begin{ntn}
	Throughout this paper, we use the following notations:
	\begin{itemize}
		\item $\mathbb{K}$ : a base field;
		\item $\operatorname{char}(\mathbb{K})$ : the characteristic of $\mathbb{K}$;
		\item $V, V_i,W, W_i$ : finite dimensional vector spaces over $\mathbb{K}$;
		\item $V^*$ : the dual vector space of $V$;
		\item $[d] = \{1, 2, \cdots, d \}$ where $d$ is a positive integer;
		\item $\mathfrak{S}_d$ : the symmetric group on $d$ letters;
		\item $\mathfrak{A}_d$ : the alternating group on $d$ letters;
		\item $\operatorname{GL(V)}(\text{or}~\operatorname{GL_n})$ : general linear group of $V$ ($\dim V=n$);
		\item $\operatorname{SL(V)}(\text{or}~\operatorname{SL_n})$ : special linear group of $V$ ($\dim V=n$).
	\end{itemize}
\end{ntn}

\subsection{Tensor Rank}\label{SubSect:Tensor Rank}

We first review some basic definitions of tensors and their ranks.

\begin{dfn}[Tensor]
	A map $\varphi:V_1^* \times V_2^* \times \cdots \times V_{d-1}^* \rightarrow V_d$ is said to be \emph{multilinear} if it is linear with respect to each vector space $V_i^*$ for $i \in [d-1]$. The space of such multilinear maps is identical to $V_1 \otimes V_2 \otimes \cdots \otimes V_d$. An element $T \in V_1 \otimes V_2 \otimes \cdots \otimes V_d$ is called a \emph{tensor}, and the number $d$ is called the \emph{order of $T$}.
\end{dfn}

\begin{dfn}[Tensor rank]
	Let $T \in V_1 \otimes V_2 \otimes \cdots \otimes V_d$. Then 
	\begin{equation*}
		r=\min\left\{ k ~\middle|~T=\sum_{i=1}^k v_{i,1} \otimes v_{i,2} \otimes \cdots \otimes v_{i,d} \text{ where } v_{i,j}\in V_{j} \text{ for each } j \in [d] \right\}
	\end{equation*}
	is called the \emph{tensor rank of $T$}, and denoted by $\mathbf{R}(T)$. A tensor of rank $\le 1$ is said to be \emph{decomposable}. 
\end{dfn}

\begin{dfn}[Border rank]
	Let $Z_r$ be the subspace of $V_1 \otimes V_2 \otimes \cdots \otimes V_d$ consisting of tensors of tensor rank $\leq r$, and let $\overline{Z_r}$ denote the Zariski closure of $Z_r$. Let $T \in V_1 \otimes V_2 \otimes \cdots \otimes V_d$. The integer
	\begin{equation*}
		r=\min\{k~|~T \in \overline{Z_k}\}
	\end{equation*}
	is called the \emph{border rank of $T$}, and denoted by $\underline{\mathbf{R}}(T)$. When $\mathbb{K}$ is a subfield of $\mathbb{C}$ so that it is possible to use Euclidean topology, equivalently, the border rank of $T$ is 
	\begin{equation*}
		r=\min\left\{k~|~ T=\lim_{\epsilon \rightarrow 0}T_{\epsilon}~\text{for some}~T_{\epsilon} \in V_1 \otimes V_2 \otimes \cdots \otimes V_d~\text{with} ~\mathbf{R}(T_{\epsilon})=k \right\}.
	\end{equation*}
\end{dfn}

\subsection{Representation Theory}\label{SubSect:Representation Theory}

Since the central objects of this paper are the determinant and the permanent tensors which have a lot of symmetries, we review some basic facts on representation theory. Assume that $\mathbb{K}=\mathbb{C}$ until the end of this section. 

%Briefly, representation theory is for investigating vector spaces that are invariant under group actions, and linear maps between them which are invariant under the group actions. 

Let $G$ be a group. A vector space that is invariant under a $G$-action is called a \emph{representation} of $G$ (or \emph{$G$-module}). A linear map between representations of $G$ that is invariant under the $G$-action, is called a \emph{$G$-equivariant map} (or \emph{$G$-linear map}). The detailed definition is as follows.

\begin{dfn}[$G$-equivariant map]
	Let $V$ and $W$ be representations of $G$. A linear map $\varphi:V \rightarrow W$ is said to be \emph{$G$-equivariant} if $\varphi(g \cdot v)=g \cdot \varphi(v)$ for all $g \in G, v \in V$, where $\cdot$ denotes the corresponding $G$-action to each of $V$ and $W$. If a $G$-equivariant map $\varphi$ is a linear isomorphism, then $\varphi$ is called an isomorphism.
\end{dfn}

When $V$ is a representation of $G$, a subspace $V'$ of $V$ which is also a representation of $G$ is called a \emph{subrepresentation} (or \emph{$G$-submodule}) of $V$. If $V$ has no proper nonzero subrepresentation, then $V$ is said to be \emph{irreducible}. The following, known as Schur's lemma, shows an elementary and important property of $G$-equivariant maps.

\begin{lem}[Schur's lemma]
	If $V$ and $W$ are irreducible representations of $G$ and $\varphi:V \rightarrow W$ is a $G$-equivariant map, then $\varphi$ is either a zero map or an isomorphism.
\end{lem}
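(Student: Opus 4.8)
The plan is to run the standard kernel/image argument, exploiting the fact that $G$-equivariance forces both the kernel and the image of $\varphi$ to be subrepresentations, and then invoke irreducibility of $V$ and $W$ respectively to pin each of them down to one of the two trivial options.

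First I would verify that $\ker \varphi \subseteq V$ is a $G$-submodule: if $v \in \ker \varphi$ and $g \in G$, then $\varphi(g \cdot v) = g \cdot \varphi(v) = g \cdot 0 = 0$, so $g \cdot v \in \ker \varphi$; since $\varphi$ is linear, $\ker \varphi$ is also a linear subspace, hence a subrepresentation. Likewise I would check that $\im \varphi \subseteq W$ is a $G$-submodule: any element of $\im \varphi$ has the form $\varphi(v)$, and $g \cdot \varphi(v) = \varphi(g \cdot v) \in \im \varphi$, so $\im \varphi$ is $G$-stable, and it is a linear subspace because $\varphi$ is linear.

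Next I would apply irreducibility. Since $V$ is irreducible and $\ker \varphi$ is a subrepresentation, either $\ker \varphi = V$ or $\ker \varphi = 0$. In the first case $\varphi$ is the zero map and we are done. In the second case $\varphi$ is injective, so $\im \varphi \neq 0$; but $\im \varphi$ is a subrepresentation of the irreducible module $W$, so $\im \varphi = W$, i.e., $\varphi$ is surjective. Hence $\varphi$ is a bijective linear map. To conclude that it is an isomorphism of representations in the sense defined above, I would observe that the set-theoretic inverse $\varphi^{-1}: W \to V$ is automatically linear and is again $G$-equivariant: applying $\varphi^{-1}$ to $\varphi(g \cdot \varphi^{-1}(w)) = g \cdot w$ gives $g \cdot \varphi^{-1}(w) = \varphi^{-1}(g \cdot w)$.

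There is no real obstacle here — this is a foundational lemma and each step is a one-line check. The only point that deserves a moment's care is making sure that the two alternatives are not conflated: irreducibility of $V$ controls $\ker \varphi$ while irreducibility of $W$ controls $\im \varphi$, and one must use both (the case split happens on the kernel, and the nonzero-kernel branch then forces surjectivity via the image). I would also note in passing that the argument uses nothing about $\mathbb{K}$ being algebraically closed; that hypothesis is only needed for the sharper statement that a $G$-equivariant endomorphism of an irreducible module is a scalar, which we do not claim here.
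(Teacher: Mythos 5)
Your argument is correct and is precisely the standard kernel/image proof: equivariance makes $\ker\varphi$ and $\operatorname{im}\varphi$ subrepresentations, irreducibility of $V$ and $W$ forces each to be trivial or everything, and the inverse of a bijective equivariant map is again equivariant. The paper states this lemma as background without proof, so there is nothing to compare beyond noting that your proof is the canonical one (and your remark that algebraic closedness of $\mathbb{K}$ is not needed for this form of the statement is accurate).
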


A \emph{partition} of a positive integer $d$ is the set of positive integers $\lambda=(\lambda_1, \lambda_2, \ldots, \lambda_k)$ such that $\lambda_1 \geq \lambda_2 \geq \cdots \geq \lambda_k>0$ and $\lambda_1+\lambda_2+\cdots+\lambda_k=d$. 

\begin{dfn}[Young tableau]
	Let $\lambda=(\lambda_1, \lambda_2, \ldots, \lambda_k)$ be a partition of $d$. The \emph{Young diagram associated with $\lambda$} is a diagram with $\lambda_i$ boxes in the $i$th row where the rows of boxes are lined up on the left. A labelled Young diagram $T_{\lambda}=(t_{j}^{i})$ with $t_{j}^{i}\in [d]$ is called a \emph{Young tableau}. Let $t^i$ and $t_j$ denote the $i$th row and $j$th column of $T_{\lambda}$, respectively.
	\begin{itemize}
		\item [(\romannumeral1)] If each element of $[d]$ appears exactly once in $T_{\lambda}$, then $T_{\lambda}$ is called a \emph{Young tableau without repetitions}.
		\item [(\romannumeral2)] A Young tableau $T_{\lambda}$ is said to be \emph{semistandard} if the entries on each row are nondecreasing and those on each column are strictly increasing.
	\end{itemize}
\end{dfn}

Assume that $G$ is a finite group. Let $\mathbb{C}[G]$ denote the group algebra of $G$, that is, a vector space over $\c$ generated by $\{e_g~|~g \in G\}$ with algebra structure induced by $e_ge_h=e_{g\cdot h}$, where $\cdot$ denotes the group operation of $G$.

\begin{dfn}[Schur's module]
	Let  $\lambda=(\lambda_1,\lambda_2, \ldots, \lambda_k)$ be a partition of $d$ and  $T_{\lambda}=(t_j^i)$ a Young tableau without repetitions. %Consider $\mathfrak{S}_d$. 
	Let $\mathfrak{S}(t^i)$ and $\mathfrak{S}(t_j)$ denote the subgroup of $\mathfrak{S}_d$ consisting of permutations on the indices appearing in $i$th row and $j$th column, respectively. Let
	\begin{equation*}
		\rho_{t^i}=\sum_{g \in \mathfrak{S}(t^i)}e_g~~\text{and}~~\rho_{t_j}=\sum_{g \in \mathfrak{S}(t_j)}sgn(g)e_g.
	\end{equation*}
	\begin{itemize}
		\item [(\romannumeral1)] The \emph{Young symmetrizer} of $T_{\lambda}$ is defined by
		\begin{equation*}
			\rho_{t^1}\rho_{t^2}\cdots \rho_{t^k}\rho_{t_1}\rho_{t_2}\cdots \rho_{t_{\lambda_1}}
		\end{equation*}
		and denoted by $\rho_{T_{\lambda}}$.

		\item [(\romannumeral2)]  Let $\mathfrak{S}_d$ act  on $V^{\otimes d}$ on the right as
		\begin{equation*}
			(v_1 \otimes v_2 \otimes \cdots \otimes v_d)\cdot g =v_{g(1)} \otimes v_{g(2)} \otimes \cdots \otimes v_{g(d)}
		\end{equation*}
		for $g \in \mathfrak{S}_d$ and $v_1 \otimes v_2 \otimes \cdots \otimes v_d \in V^{\otimes d}$. The space
		\begin{equation*}
			\mathbb{S}_{T_{\lambda}}V:=\rho_{T_{\lambda}}(V^{\otimes d})
		\end{equation*}
		is called the \emph{Schur's module} (or \emph{Weyl's module}) corresponding to $T_{\lambda}$.
	\end{itemize} 
\end{dfn}
It is well-known that $\mathbb{S}_{T_{\lambda}}V$ is an irreducible representation of $\operatorname{GL(V)}$. Furthermore, for given Young tableaux without repetitions $T_{\lambda}$ and $\tilde{T}_{\lambda}$, Schur's modules $\mathbb{S}_{T_{\lambda}}V$ and $\mathbb{S}_{\tilde{T}_{\lambda}}V$ are isomorphic as $\operatorname{GL(V)}$-representations, and hence we denote simply $\mathbb{S}_{\lambda}V$ instead of $\mathbb{S}_{T_{\lambda}}V$.

\begin{ex}
	Let $T_{\lambda}$ be
	\begin{equation*}
		{\footnotesize 
			\begin{ytableau}
				1 & 2  \\
				3
		\end{ytableau}}~~,
	\end{equation*}
	a semistandard Young tableau associated with the partition $\lambda=(2,1)$ of $3$. Then
	\begin{align*}
		\rho_{T_{\lambda}}(v_1 \otimes v_2 \otimes v_3)&=(1+e_{(1~2)}-e_{(1~3)}-e_{(1~3~2)})(v_1 \otimes v_2 \otimes v_3)\\
		&=v_1 \otimes v_2 \otimes v_3+v_2 \otimes v_1 \otimes v_3-v_3 \otimes v_2 \otimes v_1-v_3 \otimes v_1 \otimes v_2,
	\end{align*} 
	and so
	\begin{equation*}
		\mathbb{S}_{\lambda}V=\{v_1 \otimes v_2 \otimes v_3+v_2 \otimes v_1 \otimes v_3-v_3 \otimes v_2 \otimes v_1-v_3 \otimes v_1 \otimes v_2~|~v_1,v_2,v_3 \in V\}.
	\end{equation*}
\end{ex}

Note that $\mathbb{S}_{\mu} V=\Lambda^m V$ for the partition $\mu=(1,1, \ldots, 1)$ of $m$. 

\begin{pro}[Pieri's formula]\label{prop:Pieri}
	Let $\lambda$ be a partition. Then 
	\begin{equation*}
		\mathbb{S}_{\lambda}V \otimes \Lambda^mV \cong \bigoplus_{\pi} \mathbb{S}_{\pi}V
	\end{equation*}
	where the direct sum is taken over all partitions $\pi$ whose Young diagrams are obtained from that of $\lambda$ by adding $m$ boxes, with no two in the same row.
\end{pro}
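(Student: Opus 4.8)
The plan is to pass to characters and reduce Pieri's formula to the classical identity for Schur polynomials, and then to conclude by semisimplicity. Write $n=\dim V$ and work over $\mathbb{C}$ as in this subsection. Recall that the formal character of the polynomial $\operatorname{GL}(V)$-module $\mathbb{S}_{\lambda}V$ is the Schur polynomial $s_{\lambda}(x_1,\dots,x_n)$, given by the bialternant formula $s_{\lambda}=a_{\lambda+\delta}/a_{\delta}$ with $\delta=(n-1,n-2,\dots,1,0)$ and $a_{\mu}=\det(x_i^{\mu_j})_{1\le i,j\le n}$; and that the character of $\Lambda^{m}V$ is the elementary symmetric polynomial $e_m(x_1,\dots,x_n)$, since $\{e_{i_1}\wedge\cdots\wedge e_{i_m}: 1\le i_1<\cdots<i_m\le n\}$ is a weight basis with weights $x_{i_1}\cdots x_{i_m}$. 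Since every finite-dimensional polynomial $\operatorname{GL}(V)$-module is semisimple and is determined up to isomorphism by its character, and the Schur polynomials indexed by partitions with at most $n$ parts are linearly independent, it suffices to prove the polynomial identity
\[
s_{\lambda}\cdot e_m \;=\; \sum_{\pi} s_{\pi},
\]
the sum being over partitions $\pi$ obtained from $\lambda$ by adding $m$ boxes, with no two in the same row; here $s_{\pi}=0$ whenever $\pi$ has more than $n$ parts, matching $\mathbb{S}_{\pi}V=0$.

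For the symmetric-polynomial identity I would clear denominators and compute with alternants. Writing $e_m=\sum_{|T|=m}x^{\mathbf{1}_T}$ as a sum of squarefree monomials over $m$-subsets $T\subseteq[n]$, multiplying each antisymmetrized monomial occurring in $a_{\lambda+\delta}$ by these, and reindexing $T$ by the relevant permutation, one obtains
\[
a_{\lambda+\delta}\cdot e_m \;=\; \sum_{|T|=m} a_{\lambda+\delta+\mathbf{1}_T}.
\]
Now $a_{\nu}=0$ whenever the exponent vector $\nu$ has a repeated entry, so only those $T$ survive for which $\lambda+\delta+\mathbf{1}_T$ has pairwise distinct entries. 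The key elementary point is that $\lambda+\delta$ is strictly decreasing with all consecutive gaps at least $1$, hence $\lambda+\delta+\mathbf{1}_T$ is always weakly decreasing; so it has distinct entries precisely when it is strictly decreasing, and then no reordering is needed, giving $a_{\lambda+\delta+\mathbf{1}_T}=a_{\pi+\delta}$ with positive sign, where $\pi:=\lambda+\mathbf{1}_T$. Since $\pi+\delta$ being strictly decreasing is equivalent to $\pi$ being a partition, and in that case $\pi/\lambda$ has at most one box in each row (exactly the rows indexed by $T$), the surviving summands are in sign-preserving bijection with the partitions $\pi$ in the Pieri sum. Dividing by $a_{\delta}$ yields $s_{\lambda}e_m=\sum_{\pi}s_{\pi}$.

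Combining this character identity with complete reducibility of polynomial $\operatorname{GL}(V)$-modules then gives $\mathbb{S}_{\lambda}V\otimes\Lambda^{m}V\cong\bigoplus_{\pi}\mathbb{S}_{\pi}V$; note that each multiplicity is $0$ or $1$, reflecting that $\Lambda^m V$ is a minuscule $\operatorname{SL}(V)$-module. The only step requiring genuine care is the bookkeeping in the second paragraph: verifying that \emph{$\lambda+\delta+\mathbf{1}_T$ has pairwise distinct entries} translates exactly into \emph{$\pi/\lambda$ is a vertical $m$-strip} and that no sign cancellation occurs. This is routine once the gap condition on $\lambda+\delta$ is exploited. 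Alternatively, one may simply quote the Schur-polynomial Pieri rule from a standard reference (for instance Macdonald, or Fulton--Harris), leaving only the two character computations and the semisimplicity argument to be recorded.
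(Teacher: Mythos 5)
The paper does not prove this proposition at all: Pieri's formula is stated in the Preliminaries as a classical background fact, imported from the standard representation-theory literature (Fulton--Harris et al.), so there is no in-paper argument to compare yours against. Your proof is correct and is the classical one: reduce to the Schur-polynomial identity $s_{\lambda}e_m=\sum_{\pi}s_{\pi}$ via characters, prove that identity by clearing the denominator $a_{\delta}$ and computing $a_{\lambda+\delta}\cdot e_m=\sum_{|T|=m}a_{\lambda+\delta+\mathbf{1}_T}$ (the reindexing $T\mapsto w(T)$ is legitimate because $e_m$ is symmetric), and then transfer back by semisimplicity and character-determination of polynomial $\operatorname{GL}(V)$-modules over $\mathbb{C}$. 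The one delicate point, that no signs or reorderings occur, is handled correctly: since $\lambda+\delta$ is strictly decreasing with consecutive gaps at least $1$, the vector $\lambda+\delta+\mathbf{1}_T$ is always weakly decreasing, so it either has a repeat (and the alternant vanishes) or is already strictly decreasing, giving $a_{\pi+\delta}$ with coefficient $+1$, and the surviving $T$ biject with the vertical $m$-strips $\pi/\lambda$; you also correctly discard $\pi$ with more than $n$ rows, where both $s_{\pi}$ (in $n$ variables) and $\mathbb{S}_{\pi}V$ vanish. The only inputs you quote without proof are themselves standard (the character of $\mathbb{S}_{\lambda}V$, as defined in the paper via Young symmetrizers, is $s_{\lambda}$; polynomial $\operatorname{GL}(V)$-modules are semisimple and determined by their characters), which is entirely appropriate for a statement the paper itself treats as known; what your write-up buys over the paper's bare citation is a self-contained verification, at the cost of assuming those two classical facts.
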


%--------------------------------------------------------------

\section{Recursive Koszul Flattening}\label{Sect:Recursive Koszul Flattening}

In this section, we review the recursive Koszul flattening method introduced by Hauenstein-Oeding-Ottaviani-Sommese \cite[Section 5.1]{MR3987862} which is a key ingredient of this paper. It is one of the rank methods that provide a lower bound of the tensor rank of a given tensor. Note that there are various rank methods \cite{MR3320240, MR3781583, MR3987583, MR4284788}, and these methods are  obtained by the subadditivity of the matrix rank. 

% Rank method

\begin{pro}[Rank method]\label{RankMethod}
	Let $\operatorname{Mat}_{l \times m}$ be the space of all $l \times m$ matrices over $\mathbb{K}$. Let
	\begin{equation*}
		\phi:V_1 \otimes \cdots \otimes V_n \rightarrow  \operatorname{Mat}_{l \times m}
	\end{equation*}
	be a linear map. Suppose that for each rank one tensor $S \in V_1 \otimes \cdots \otimes V_n$, we have
	\begin{equation*}
		\operatorname{rank}(\phi(S))\leq r.
	\end{equation*}
	Then for any tensor $T \in V_1 \otimes \cdots \otimes V_n$, we have
	\begin{equation*}
		\operatorname{rank}(\phi(T)) \leq r \cdot \underline{\mathbf{R}}(T). 
	\end{equation*}
\end{pro}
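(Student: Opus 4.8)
The plan is to exploit bilinearity of the rank method exactly as in the matrix case: a tensor of border rank $r$ is a limit of sums of at most $r$ rank-one tensors, and $\phi$ being linear sends such a sum to a sum of at most $r$ matrices each of rank $\le r$ (here I mean the hypothesis bound, so I will rename to avoid clashing with the letter $r$). Concretely, first I would prove the statement for tensors $T$ of \emph{tensor} rank $\le k$: write $T = S_1 + \cdots + S_k$ with each $S_i$ rank one, apply $\phi$, and use subadditivity of matrix rank together with the hypothesis $\operatorname{rank}(\phi(S_i)) \le r$ to conclude $\operatorname{rank}(\phi(T)) \le r \cdot k$. This gives the bound with $\mathbf{R}(T)$ in place of $\underline{\mathbf{R}}(T)$.

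The second step is the passage from tensor rank to border rank, which is where the only real subtlety lies. The set of matrices of rank $\le r \cdot k$ is Zariski closed in $\operatorname{Mat}_{l \times m}$ (it is the determinantal variety cut out by the vanishing of all $(rk+1)\times(rk+1)$ minors). Since $\phi$ is a linear, hence continuous (and in fact regular/algebraic), map, the preimage $\phi^{-1}(\{M : \operatorname{rank} M \le rk\})$ is Zariski closed in $V_1 \otimes \cdots \otimes V_n$. By Step 1 this preimage contains $Z_k$, the set of all tensors of tensor rank $\le k$, and therefore it contains its Zariski closure $\overline{Z_k}$. If $T$ has border rank $k$ then $T \in \overline{Z_k}$ by definition, so $\phi(T)$ has rank $\le r \cdot k = r \cdot \underline{\mathbf{R}}(T)$. (If one prefers the Euclidean-topology formulation available when $\mathbb{K} \subseteq \mathbb{C}$, one can instead write $T = \lim_{\epsilon \to 0} T_\epsilon$ with $\mathbf{R}(T_\epsilon) = k$, apply Step 1 to get $\operatorname{rank}(\phi(T_\epsilon)) \le rk$, and use lower semicontinuity of matrix rank under limits together with continuity of $\phi$; but the Zariski argument is cleaner and field-independent.)

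The main obstacle, such as it is, is purely a matter of bookkeeping: making sure the rank bound one proves for finite sums is genuinely closed under limits, i.e.\ invoking that the locus of bounded-rank matrices is closed rather than the (false) claim that it is open, and being careful that the hypothesis applies to \emph{every} rank-one tensor so that the decomposition in Step 1 really does produce $k$ matrices each obeying the bound. There is no computation to grind through; the proof is three or four lines once the closedness observation is in place. I would present it in that order — reduce to tensor rank by subadditivity, then upgrade to border rank by closedness of the determinantal locus and continuity of $\phi$.
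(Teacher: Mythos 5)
Your proof is correct and is exactly the standard argument the paper itself relies on (it states the rank method without proof, remarking only that such methods follow from subadditivity of matrix rank and citing the literature): subadditivity of matrix rank under the linear map $\phi$ handles tensors of bounded tensor rank, and Zariski closedness of the locus $\{M : \operatorname{rank} M \le rk\}$ together with continuity of $\phi$ upgrades the bound from $\mathbf{R}(T)$ to $\underline{\mathbf{R}}(T)$. No gaps; this matches the intended reasoning.
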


The rank method  gives a framework to improve the lower bounds on border rank, and thus those of tensor rank. We call the following method the recursive Koszul flattening method which is one of the rank methods that generalize the flattening \cite[Section 3.4]{MR2865915} and the Koszul flattening method \cite[Theorem 2.1]{MR3376667}.

\begin{pro}[{Recursive Koszul flattening \cite{MR3987862}}]\label{MainPropRecursive}
	Let $T=\sum_{i=1}^k v_{i,1} \otimes \cdots \otimes v_{i,d} \in V_1 \otimes \cdots \otimes V_d$ where $\dim V_i=m_i$ for $i \in [d]$. Fix $1 \leq s \leq d-1$ and $p_i \in [m_i-1]$ for $i \in [s]$.  
	Define 
	\begin{equation}\label{eqRKF}
		T_{(V_{1}, \ldots, V_{s})}^{\wedge(p_{1}, \ldots, p_{s})}:  \bigotimes_{i=1}^{s} \Lambda^{p_i}V_i \otimes V_{d-1}^*  \rightarrow \bigotimes_{i=1}^{s} \Lambda^{p_i+1}V_i  \otimes \bigotimes_{j=s+1}^{d-2} V_{j} \otimes   V_d
	\end{equation}
	by
	\begin{equation}\label{eqRKFdefined}
		\begin{aligned}
			& (a_{1,1} \wedge \cdots \wedge a_{1,p_1}) \otimes \cdots \otimes (a_{s,1} \wedge \cdots \wedge a_{s,p_s}) \otimes a_{d-1}^*  \\
			&\mapsto \sum_{i=1}^k 
			\left(
			\begin{aligned}
				&(v_{i,1}\wedge (a_{1,1} \wedge \cdots \wedge a_{1,p_1})) \otimes \cdots \otimes (v_{i,s} \wedge (a_{s,1} \wedge \cdots \wedge a_{s,p_s}))\\
				&\quad\quad\quad\otimes v_{i,s+1}\otimes\cdots\otimes v_{i,d-2}
				\otimes  a_{d-1}^*(v_{i,d-1}) \cdot  v_{i,d}
			\end{aligned}
			\right)
		\end{aligned}
	\end{equation}
	and extending linearly. Then
	\begin{equation}\label{eqRKFresult}
		\left\lceil \frac{\operatorname{rank}\left(T_{(V_{1}, \ldots, V_{s})}^{\wedge(p_{1}, \ldots, p_{s})}\right)}{{m_1-1 \choose p_1}{m_2-1 \choose p_2} \cdots {m_s-1 \choose p_s} }\right\rceil \leq \underline{\mathbf{R}}(T).
	\end{equation}
	We call the linear map $T_{(V_{1}, \ldots, V_{s})}^{\wedge(p_{1}, \ldots, p_{s})}$ (or, its matrix representation) a recursive Koszul flattening of $T$.
\end{pro}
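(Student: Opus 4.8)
The plan is to deduce Proposition \ref{MainPropRecursive} from the rank method (Proposition \ref{RankMethod}) by exhibiting the linear map $T \mapsto T_{(V_1,\ldots,V_s)}^{\wedge(p_1,\ldots,p_s)}$ as a genuine linear map into a fixed matrix space and bounding the rank it assigns to a decomposable tensor. First I would fix the source and target spaces
\[
A := \bigotimes_{i=1}^{s}\Lambda^{p_i}V_i \otimes V_{d-1}^*, \qquad
B := \bigotimes_{i=1}^{s}\Lambda^{p_i+1}V_i \otimes \bigotimes_{j=s+1}^{d-2}V_j \otimes V_d,
\]
so that any $T \in V_1 \otimes \cdots \otimes V_d$ determines an element of $\hom(A,B) \cong A^* \otimes B$, i.e. a matrix of size $(\dim B) \times (\dim A)$. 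The assignment $T \mapsto T_{(V_1,\ldots,V_s)}^{\wedge(p_1,\ldots,p_s)}$ described by \eqref{eqRKFdefined} is visibly linear in $T$: each of the $s$ wedge insertions $v_{i,\ell} \wedge (-)$, each contraction $a_{d-1}^*(v_{i,d-1})$, and each plain tensor slot $v_{i,j}$ is linear in the corresponding factor, and summing over $i$ is just evaluating the same formula on a sum of rank-one tensors. Hence it suffices, by Proposition \ref{RankMethod} applied with $\phi(T) = T_{(V_1,\ldots,V_s)}^{\wedge(p_1,\ldots,p_s)}$, to check that for a single decomposable tensor $S = v_1 \otimes \cdots \otimes v_d$ one has
\[
\operatorname{rank}\!\left(S_{(V_1,\ldots,V_s)}^{\wedge(p_1,\ldots,p_s)}\right) \le \binom{m_1-1}{p_1}\binom{m_2-1}{p_2}\cdots\binom{m_s-1}{p_s},
\]
since then the rank method gives $\operatorname{rank}(T_{(V_1,\ldots,V_s)}^{\wedge(p_1,\ldots,p_s)}) \le \prod_i \binom{m_i-1}{p_i}\cdot \underline{\mathbf R}(T)$, and dividing and taking the ceiling yields \eqref{eqRKFresult}.

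The heart of the argument is the rank estimate for a decomposable $S = v_1 \otimes \cdots \otimes v_d$. On such a tensor the map sends $(\alpha_1 \otimes \cdots \otimes \alpha_s) \otimes a_{d-1}^*$ to
\[
a_{d-1}^*(v_{d-1})\cdot (v_1 \wedge \alpha_1) \otimes \cdots \otimes (v_s \wedge \alpha_s) \otimes v_{s+1}\otimes\cdots\otimes v_{d-2}\otimes v_d,
\]
so the map factors as the composite of: (i) the linear functional $A \to \bigotimes_{i=1}^s \Lambda^{p_i}V_i$ given by contracting the last factor against $v_{d-1}$ (rank $\le 1$ in that slot), followed by (ii) the tensor product of the wedge-multiplication maps $w_i \mapsto v_i \wedge w_i$ from $\Lambda^{p_i}V_i$ to $\Lambda^{p_i+1}V_i$, followed by (iii) tensoring with the fixed vector $v_{s+1}\otimes\cdots\otimes v_{d-2}\otimes v_d$ (rank $\le 1$). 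Thus $\operatorname{rank}(S_{(V_1,\ldots,V_s)}^{\wedge(p_1,\ldots,p_s)}) \le \prod_{i=1}^s \operatorname{rank}(v_i \wedge (-) : \Lambda^{p_i}V_i \to \Lambda^{p_i+1}V_i)$, and it remains to see that the rank of wedge multiplication by a single vector $v_i$ on $\Lambda^{p_i}V_i$ is at most $\binom{m_i-1}{p_i}$. If $v_i = 0$ the rank is $0 \le \binom{m_i-1}{p_i}$; if $v_i \ne 0$, extend $v_i$ to a basis $v_i = b_1, b_2, \ldots, b_{m_i}$ of $V_i$, and note that $b_1 \wedge (b_{j_1} \wedge \cdots \wedge b_{j_{p_i}})$ vanishes whenever $1 \in \{j_1,\ldots,j_{p_i}\}$, so the image is spanned by the $\binom{m_i-1}{p_i}$ elements $b_1 \wedge b_{j_1} \wedge \cdots \wedge b_{j_{p_i}}$ with $2 \le j_1 < \cdots < j_{p_i} \le m_i$. (In fact the rank is exactly $\binom{m_i-1}{p_i}$ since these images are linearly independent, but we only need the upper bound.) This gives the claimed bound.

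I expect the main obstacle to be purely bookkeeping rather than conceptual: one has to be careful that the formula \eqref{eqRKFdefined} really is well-defined on the exterior powers, i.e. that it is alternating and multilinear in the entries $a_{i,1},\ldots,a_{i,p_i}$ of each $\Lambda^{p_i}V_i$ and in the summands of $\Lambda^{p_i+1}V_i$ in the target, and that the decomposition of $S_{(V_1,\ldots,V_s)}^{\wedge(p_1,\ldots,p_s)}$ as a tensor product of the single-slot maps $v_i \wedge (-)$ composed with two rank-one maps is stated with the correct identifications $\hom(A,B) \cong \bigotimes_i \hom(\Lambda^{p_i}V_i, \Lambda^{p_i+1}V_i) \otimes (\text{rank-one factors})$; the submultiplicativity of rank under tensor products of linear maps, $\operatorname{rank}(f \otimes g) = \operatorname{rank}(f)\operatorname{rank}(g)$, is then exactly what converts the slotwise bounds into the product $\prod_i \binom{m_i-1}{p_i}$. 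Everything else — linearity of $\phi$, the appeal to Proposition \ref{RankMethod}, and the final division-and-ceiling — is immediate. One should also remark that this recovers the ordinary flattening when $s=0$ (no wedge factors, binomial product empty equal to $1$) and the Landsberg--Ottaviani Koszul flattening \cite[Theorem 2.1]{MR3376667} when $d=3$ and $s=1$, which is a useful sanity check on the indices.
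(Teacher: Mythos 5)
Your proposal is correct and follows essentially the same route as the paper: apply the rank method (Proposition \ref{RankMethod}) to the linear assignment $T \mapsto T_{(V_1,\ldots,V_s)}^{\wedge(p_1,\ldots,p_s)}$ and bound the rank of the image of a rank-one tensor by $\prod_{i=1}^{s}\binom{m_i-1}{p_i}$. The only difference is that the paper dismisses the rank-one computation as ``easy to verify'' (deferring to \cite{MR3987862} and \cite{MR3729273}), whereas you spell it out via the factorization into the contraction against $v_{d-1}$, the wedge-multiplication maps $v_i\wedge(-)$, and tensoring with a fixed vector, which is exactly the intended verification.
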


It is easy to verify that
	\begin{equation*}
		\operatorname{rank}(S_{(V_{1}, \ldots, V_{s})}^{\wedge(p_{1}, \ldots, p_{s})})={m_1-1 \choose p_1}{m_2-1 \choose p_2} \cdots {m_s-1 \choose p_s} 
	\end{equation*}
for any rank one tensor $S \in V_1 \otimes \cdots \otimes V_d$, and so (\ref{eqRKFresult}) holds by the rank method. We refer to  \cite[Section 5.1]{MR3987862} and  \cite[Section 8.2]{MR3729273} for more details.

Let us briefly explain the reason why we call $T_{(V_{1}, \ldots, V_{s})}^{\wedge(p_{1}, \ldots, p_{s})}$ a recursive Koszul flattening of $T$. Consider the given tensor $T$ as a linear map by a flattening:
\begin{equation}\label{eqflatteningT}
		T \in (V_{d-1}^*)^* \otimes \left( \bigotimes_{j=1}^{d-2} V_j \otimes V_d\right).
\end{equation}
Then the canonical map $\Lambda^{p_1}V_1 \rightarrow \Lambda^{p_1+1}V_1$ applied to $T \otimes id_{\Lambda^{p_1}V_1}$ gives us a linear map called Koszul flattening of $T$:
\begin{equation*}
		T_{V_1}^{\wedge p_1} \in \left( \Lambda^{p_1}V_1 \otimes V_{d-1}^* \right)^* \otimes \left(\Lambda^{p_1+1}V_1 \otimes \bigotimes_{j=2}^{d-2} V_j \otimes V_d\right)
\end{equation*}
defined by (\ref{eqRKFdefined}) for $s=1$ and extending linearly. As we beginned with the linear map $T$ at (\ref{eqflatteningT}), if we start from $T_{V_1}^{\wedge p_1}$, then we obtain an additional linear map
\begin{equation*}
	(T_{V_1}^{\wedge p_1})_{V_2}^{\wedge p_2} \in \left(\Lambda^{p_1}V_1 \otimes \Lambda^{p_2}V_2 \otimes V_{d-1}^* \right)^* \otimes \left(\Lambda^{p_1+1}V_1 \otimes \Lambda^{p_2+1}V_2 \otimes \bigotimes_{j=3}^{d-2} V_j \otimes V_d\right)
\end{equation*}
defined by (\ref{eqRKFdefined}) for $s=2$ and extending linearly. Such a recursive process gives us (\ref{eqRKF}), which is the reason why we call (\ref{eqRKF}) a recursive Koszul flattening. 

Note that we may take an arbitrary nonempty subset $I=\{i_1, \ldots, i_s\}$ of $[d-2]$ instead of $[s]$ in order to obtain a linear map
\begin{equation*}
	T_{(V_{i_1}, \ldots, V_{i_s})}^{\wedge(p_{i_1}, \ldots, p_{i_s})}: \bigotimes_{t=1}^{s} \Lambda^{p_{i_t}}V_{i_t} \otimes V_{d-1}^* \rightarrow \bigotimes_{t=1}^{s} \Lambda^{p_{i_t+1}}V_{i_t} \otimes \bigotimes_{j \in I^c} V_{j}\otimes V_d
\end{equation*}
where $I^c$ denotes the complement of $I$ in $[d-2]$. Moreover, we may also take another flattening
\begin{equation*}
	V_l^* \rightarrow \bigotimes_{j \in [d] \setminus \{l\}} V_j
\end{equation*}
of $T$ instead of (\ref{eqflatteningT}). We regard them also as in (\ref{eqRKF}) even though these choices may permute the components in the recursive Koszul flattening.

%However, since tensor product is commutative (up to isomorphism), we write as in (\ref{eqRKF}) without loss of generality. 

Note that our central objects $\det_n$ and $\per_n$ are defined as tensors in $V^{\otimes n}$, so we have $V_1 = V_2 = \cdots = V_n = V$. We do not distinguish the subindices $V_i$ in these cases, and denote simply $(\det_n)_V^{\wedge (p_1, \ldots, p_s)}$ and $(\per_n)_V^{\wedge (p_1, \ldots, p_s)}$ by the corresponding recursive Koszul flattening of $\det_n$ and $\per_n$.

%To obtain a better lower bound of a given tensor $T$ via a rank method, we need to design a linear map $\phi$ which sends a decomposable tensor $S$ to a matrix $\phi(S)$ of low rank whereas $\phi(T)$ is a matrix of high rank. 

%The recursive Koszul flattening method is often more efficient than the original flattening, especially for tensors of order $\geq 4$. Intuitively, it is because the recursive Koszul flattening ``flattens well". For instance, consider $\operatorname{det}_4 \in V_1 \otimes V_2 \otimes V_3 \otimes V_4$. In terms of matrix, the original Koszul flattening $(\det_4)_{V_1}^{\wedge 2}$ is a $24 \times 64$ matrix. This matrix is rectangular and has rank at most $24$, and so it gives us at most $8 \leq \underline{\mathbf{R}}(\det_4)$. However, the recursive Koszul flattening $(\det_4)_{(V_1,V_2)}^{\wedge (2,1)}$ is a $96 \times 96$ matrix. This matrix is square and has rank at most $96$, and so it gives us at most $11 \leq \underline{\mathbf{R}}(\det_4)$.

%--------------------------------------------------------------

\section{Determinant Tensors}\label{Sect:Determinant Tensors}
In this section, we apply the recursive Koszul flattening on $\det_n$ to obtain a lower bound of $\underline{\mathbf{R}} (\det_n)$. An important property of $\det_n$ is that it is invariant under the standard $\SL_n$-action. We observe that the $\SL_n$-representation theory is compatible with the recursive Koszul flattening on $\det_n$, and hence it provides an improved lower bound of $\underline{\mathbf{R}}(\det_n)$.

Inspired by some computational experiments, we found that the recursive Koszul flattening works well when $(p_1,p_2,\cdots,p_{n-2})=(1,2,\cdots,n-2)$ for the determinant tensors. The role of these numbers may be found in the proof of \Cref{thm:DetLower}.
Let $V=\k^n$ and $\{e_1,e_2,\cdots,e_n\}$ be an ordered standard basis.
From now on, we fix the basis of $\Lambda^1 V\otimes \Lambda^2 V \otimes \cdots\otimes \Lambda^{n-2}V\otimes V^*$ as
\[
\mathcal{B}_1=\left\{e_{i_{1,1}}\otimes (e_{i_{2,1}}\wedge e_{i_{2,2}})\otimes \cdots \otimes (e_{i_{n-2,1}}\wedge \cdots\wedge e_{i_{n-2,n-2}})\otimes e_{i_{n-1}}^*\mid (*)\right\}
\]
where $(*)=\left(1\leq i_{j,1}<i_{j,2}<\cdots<i_{j,j}\leq n\text{ for }j\in [n-2]\text{ and }1\leq i_{n-1}\leq n\right)$.
Then we give an order on $\mathcal{B}_1$ as the one induced from the lexicographic order on $N$-tuples 
$(i_{1,1},i_{2,1},i_{2,2},\cdots,i_{n-2,1},\cdots,i_{n-2,n-2},i_{n-1})$ where $N=1+2+\cdots+(n-2)+1$. In other words, $e_1\otimes (e_1\wedge e_2) \otimes \cdots\otimes (e_1\wedge\cdots \wedge e_{n-2})\otimes e_1^*$ is the first element of $\mathcal{B}_1$, $e_1\otimes (e_1\wedge e_2) \otimes \cdots\otimes (e_1\wedge\cdots \wedge e_{n-2})\otimes e_2^*$ is the second element, and so on.
Similarly, we fix an ordered basis $\mathcal{B}_2$ of $\Lambda^2 V\otimes \Lambda^3 V \otimes \cdots\otimes \Lambda^{n-1}V\otimes V$ as the one induced from the lexicographic order on the indices. All our code implementations also follow this convention.

Meanwhile, for $1\leq k\leq n-1$, let \[\rho_{k}: \Lambda^{k+1}V \otimes \Lambda^k V \to \Lambda^k V \otimes \Lambda^{k+1}V\] be the composition of the contraction and the multiplication maps $\Lambda^{k+1}V \otimes \Lambda^k V \to (\Lambda^{k}V \otimes V) \otimes \Lambda^k V = \Lambda^{k} V \otimes (V \otimes \Lambda^{k} V) \to \Lambda^{k} V \otimes \Lambda^{k+1} V$.

\begin{lem}\label{lem:rhoIsom}
	The map $\rho_{k}$ is an isomorphism when $\k=\c$.
\end{lem}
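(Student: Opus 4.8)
The plan is to exploit $\mathrm{GL}(V)$-equivariance together with Schur's lemma and a dimension count. First I would observe that $\rho_k$ is manifestly $\mathrm{GL}(V)$-equivariant, being a composition of the contraction map $\Lambda^{k+1}V \to \Lambda^k V \otimes V$ (the comultiplication on the exterior algebra), the canonical rearrangement of tensor factors, and the wedge multiplication $V \otimes \Lambda^k V \to \Lambda^{k+1}V$; each of these is natural in $V$ and hence $\mathrm{GL}(V)$-equivariant. Therefore $\rho_k$ is a $\mathrm{GL}(V)$-equivariant endomorphism of the source and target modules $\Lambda^{k+1}V \otimes \Lambda^k V$ and $\Lambda^k V \otimes \Lambda^{k+1}V$, which are isomorphic as $\mathrm{GL}(V)$-representations simply by swapping the two tensor slots.

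Next I would decompose $\Lambda^{k+1}V \otimes \Lambda^k V$ into irreducibles. By Pieri's formula (Proposition \ref{prop:Pieri}), since $\Lambda^k V = \mathbb{S}_{(1^k)} V$, we have $\Lambda^{k+1}V \otimes \Lambda^k V \cong \bigoplus_\pi \mathbb{S}_\pi V$ where $\pi$ ranges over the partitions obtained from the single column $(1^{k+1})$ by adding $k$ boxes with no two in the same row; concretely these are the "fat hook" partitions $\pi = (2^j, 1^{k+1+k-2j})$ for $0 \le j \le k$ (discarding those whose number of parts exceeds $n = \dim V$). Each such $\mathbb{S}_\pi V$ is irreducible and appears with multiplicity one, and the decomposition of the target $\Lambda^k V \otimes \Lambda^{k+1} V$ is identical. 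Hence by Schur's lemma, $\rho_k$ acts on each isotypic component $\mathbb{S}_\pi V$ as a scalar $c_\pi$, and $\rho_k$ is an isomorphism if and only if $c_\pi \neq 0$ for every $\pi$ appearing.

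The main obstacle is thus to show that none of these scalars vanishes, i.e.\ that $\rho_k$ kills no irreducible summand. The cleanest way I would try is to exhibit, for each relevant $\pi$, a highest weight vector $u_\pi \in \Lambda^{k+1}V \otimes \Lambda^k V$ of weight $\pi$ and check directly that $\rho_k(u_\pi) \neq 0$; such vectors can be written explicitly in terms of the standard basis $e_1, \dots, e_n$ (for the hook-type weights the highest weight vector is a short, combinatorially transparent combination of wedge monomials), and applying contraction-then-multiplication to $e_I \otimes e_J$ produces $\sum_{a \notin I} \pm\, (e_I \wedge e_a) \otimes (e_a \text{ removed from } J)$-type sums whose leading term does not cancel. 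Alternatively, and perhaps more slickly, one can compose $\rho_k$ with the analogous map $\rho_k' : \Lambda^k V \otimes \Lambda^{k+1} V \to \Lambda^{k+1} V \otimes \Lambda^k V$ built the same way; the composite $\rho_k' \circ \rho_k$ is a $\mathrm{GL}(V)$-equivariant endomorphism of $\Lambda^{k+1}V \otimes \Lambda^k V$ whose restriction to the Cartan piece can be computed via the standard identity relating "comultiply then multiply" to a weighted identity (a Laplace-type expansion), showing it is a positive combination of projections with nonzero eigenvalues. Either route reduces the lemma to a finite, explicit check on highest weight vectors. The hypothesis $\mathbb{K} = \mathbb{C}$ is used to guarantee complete reducibility and the applicability of Schur's lemma in the form stated in Section \ref{SubSect:Representation Theory}.
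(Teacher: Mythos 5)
Your framework coincides with the paper's: equivariance of $\rho_k$, Pieri's formula giving a multiplicity-free decomposition of both $\Lambda^{k+1}V\otimes\Lambda^kV$ and $\Lambda^kV\otimes\Lambda^{k+1}V$ into the fat hooks $\lambda=(2^m,1^{2k-2m+1})$, and Schur's lemma reducing the statement to the nonvanishing of $\rho_k$ on each summand $\mathbb{S}_{\lambda}V$. The genuine gap is that this last step, which is the entire content of the lemma, is asserted rather than proved. Your first route needs the highest weight vector of $\mathbb{S}_{\lambda}V$ \emph{inside} $\Lambda^{k+1}V\otimes\Lambda^kV$; this is not a single term $e_I\otimes e_J$ but a signed sum over all splittings of $\{m+1,\dots,2k-m+1\}$ into sets of sizes $k+1-m$ and $k-m$, and the claim that after applying $\rho_k$ ``the leading term does not cancel'' is precisely the noncancellation one must verify, with the actual coefficients in hand. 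The paper avoids this computation by a different device: for each $\lambda$ it starts from the \emph{decomposable} highest weight vector $(e_1\wedge\cdots\wedge e_{2k-m+1})\otimes(e_1\wedge\cdots\wedge e_m)\in\Lambda^{2k-m+1}V\otimes\Lambda^mV$, shows that the natural map $p_2\circ p_1$ into $\Lambda^kV\otimes\Lambda^{k+1}V$ factors through $\rho_k$, and after projecting onto $\langle e_1\wedge\cdots\wedge e_k\rangle\otimes\Lambda^{k+1}V$ reads off the nonzero coefficient $(k-m+1)!$; Schur's lemma then yields injectivity on $\mathbb{S}_{\lambda}V$ without ever writing that highest weight vector down. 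Some such explicit verification is what your proposal is missing.

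Your ``slicker'' alternative is moreover circular as stated: the map $\rho_k'$ built the same way is, up to sign, the adjoint of $\rho_k$ for the standard invariant Hermitian form, so $\rho_k'\circ\rho_k$ is automatically positive semidefinite, and asserting that its eigenvalues are all nonzero is just a restatement of the injectivity of $\rho_k$ that you are trying to prove. The idea can be repaired: the operators $\sum_i \iota_{e_i^*}\otimes(e_i\wedge\,\cdot\,)$ and $\sum_i (e_i\wedge\,\cdot\,)\otimes\iota_{e_i^*}$ generate an $\mathfrak{sl}_2$-action on $\bigoplus_{a+b=2k+1}\Lambda^aV\otimes\Lambda^bV$ for which the source and target of $\rho_k$ are the weight spaces $1$ and $-1$, and a lowering operator between the weight-$1$ and weight-$(-1)$ spaces of a finite-dimensional $\mathfrak{sl}_2$-module is always an isomorphism; but this argument (or an explicit eigenvalue computation) has to be supplied, and as written the proposal contains neither it nor the paper's computation.
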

\begin{proof}
	Note that both of $\Lambda^{k+1} V \otimes \Lambda^kV$ and $\Lambda^k V \otimes \Lambda^{k+1}V$ are representations of $\SL_n$ and $\rho_{k}$ is an $\SL_n$-equivariant map.
	Thanks to Pieri's formula (\Cref{prop:Pieri}), the representation $\Lambda^{k+1} V \otimes \Lambda^k V$ can be decomposed into Schur's modules $\s_\lambda V$ corresponding to the Young diagrams obtained by adding $k$ boxes to the $k+1$ boxes in a column such that no two boxes are added in the same row.
	Such Young diagrams are of the following form where $0\leq m\leq k$.
	
	\begin{figure}[htb]
		\centering
		\begin{tikzpicture}[
			BC/.style = {decorate, % for fancy calligraphic braces
				decoration={calligraphic brace, amplitude=5pt, raise=1mm},
				very thick, pen colour={black}
			},
			]
			\matrix (m) [matrix of math nodes,
			nodes={draw, minimum size=6mm, anchor=center},
			column sep=-\pgflinewidth,
			row sep=-\pgflinewidth
			]
			{
				~ & ~ \\
				|[draw=none,text height=3mm]| \vdots & |[draw=none,text height=3mm]| \vdots \\
				~ & ~ \\
				~ &|[draw=none,text height=3mm]| \\
				|[draw=none,text height=3mm]| \vdots \\
				~ &|[draw=none,text height=3mm]| \\
			};
			\draw[BC] (m-6-1.south west) -- node[left =2.2mm] {$2k-m+1$} (m-1-1.north west);
			\draw[BC] (m-1-2.north east) -- node[right=2.2mm] {$m$} (m-3-2.south east);
		\end{tikzpicture}
		\hspace{36pt}
	\end{figure}
	This diagram corresponds to the partition $$\lambda=(\underbrace{2, \cdots, 2}_{m}, \underbrace{1, \cdots, 1}_{2k-2m+1})$$ of $2k+1$. %where the entry 2 appears $m$ times.
	We denote the set of such partitions by $I$.
	Embed $\s_\lambda V$ into $\Lambda^{2k-m+1}V\otimes \Lambda^{m}V$ and pick $v=(e_1\wedge e_2\wedge \cdots \wedge e_{2k-m+1}) \otimes (e_1 \wedge e_2 \wedge \cdots \wedge e_m)$ which is the highest weight vector of $\s_\lambda V$.
	Let $W$ be the subspace of $\Lambda^kV$ spanned by $e_1\wedge \cdots \wedge e_k$ and $\pi:\Lambda^kV\to W$ be the projection.
	Let $p_1:\Lambda^{2k-m+1}V\otimes \Lambda^m V\to \Lambda^k V\otimes V^{\otimes k-m+1}\otimes \Lambda^m V$ be the map induced from the canonical map $\Lambda^{2k-m+1}V\to\Lambda^k V\otimes V^{\otimes k-m+1}$, and let $p_2:\Lambda^k V\otimes V^{\otimes k-m+1}\otimes \Lambda^m V\to \Lambda^k V\otimes \Lambda^{k+1}V$ be the map induced from the canonical map $V^{\otimes k-m+1}\otimes \Lambda^m V\to\Lambda^{k+1}V$.
	One can see $p_2\circ p_1$ factors through $\rho_k$ by reordering the contraction maps and wedge product maps.
	Hence we have the following commutative diagram for each $m$. 
	\[\begin{tikzcd}
		&\Lambda^{k+1}V\otimes \Lambda^kV\arrow[rd,"\rho_k"]&\\
		\Lambda^{2k-m+1}V\otimes\Lambda^mV \arrow[r,"p_1"] \arrow[ru] & \Lambda^{k}V\otimes V^{\otimes{k-m+1}}\otimes\Lambda^mV \arrow[r,"p_2"]\arrow[d, "{\pi \otimes id}"] & \Lambda^{k}V\otimes\Lambda^{k+1}V \arrow[d, "{\pi \otimes id} "] \\
		& W\otimes V^{\otimes{k-m+1}}\otimes\Lambda^mV \arrow[r] & W\otimes \Lambda^{k+1}V
	\end{tikzcd}\]
	Let $\mathfrak{S}_J$ be the symmetric group on the set $J=\{k+1,k+2,\cdots,2k-m+1\}$. Then the projection of $p_1(v)$ to $W \otimes V^{\otimes k-m+1} \otimes \Lambda^m V$ is $\sum_{\sigma \in \mathfrak{S}_J} \sgn(\sigma) ~ (e_1\wedge \cdots\wedge e_k) \otimes (e_{\sigma(k+1)}\otimes \cdots \otimes e_{\sigma(2k-m+1)}) \otimes (e_1\wedge \cdots \wedge e_m)$. This element maps to $(k-m+1)! ~ (e_1\wedge \cdots \wedge e_k) \otimes (e_1 \wedge \cdots \wedge e_m \wedge e_{k+1} \wedge \cdots \wedge e_{2k-m+1})$ in $W \otimes \Lambda^{k+1}V$. Therefore $p_2(p_1(v))$ is nonzero. As $p_2 \circ p_1$ factors through $\rho_k$, the restriction of $\Lambda^{2k-m+1} V \otimes \Lambda^m V \to \Lambda^{k+1} V \otimes \Lambda^k V$ onto $\s_\lambda V$ is nonzero as it sends $v\in \s_\lambda V$ to a nonzero element. In particular, $\s_\lambda V$ maps injectively into $\Lambda^{k+1} V \otimes \Lambda^k V$ by Schur's lemma. Similarly, $\rho_k$ sends $\s_\lambda V$ injectively into $\Lambda^{k}V\otimes\Lambda^{k+1}V$.
	As $\Lambda^{k}V\otimes \Lambda^{k+1}V$ also has the decomposition $\bigoplus_{\lambda\in I}\s_{\lambda}V$, the result follows.
\end{proof}

\begin{thm}\label{thm:DetLower}
	When $\cha(\k)=0$, we have $\brk(\det_n)\geq \dfrac{n^{n-1}}{(n-1)!}$.
\end{thm}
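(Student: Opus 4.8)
The plan is to apply the recursive Koszul flattening of \Cref{MainPropRecursive} with $s=n-2$ and $(p_1,\ldots,p_{n-2})=(1,2,\ldots,n-2)$, and to prove that the resulting linear map
\[
\phi:=(\det_n)_V^{\wedge(1,2,\ldots,n-2)}\colon\ \Lambda^1V\otimes\Lambda^2V\otimes\cdots\otimes\Lambda^{n-2}V\otimes V^*\ \longrightarrow\ \Lambda^2V\otimes\Lambda^3V\otimes\cdots\otimes\Lambda^{n-1}V\otimes V
\]
is an isomorphism. Granting this, since $\dim\Lambda^kV=\binom{n}{k}$ and $\binom{n}{1}=\binom{n}{n-1}$, both the source and the target of $\phi$ have dimension $n\prod_{k=1}^{n-2}\binom{n}{k}$, so $\operatorname{rank}(\phi)=n\prod_{k=1}^{n-2}\binom{n}{k}$. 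Feeding this into \eqref{eqRKFresult} and using $\binom{n}{k}/\binom{n-1}{k}=n/(n-k)$ gives
\[
\brk(\det_n)\ \geq\ \frac{n\prod_{k=1}^{n-2}\binom{n}{k}}{\prod_{k=1}^{n-2}\binom{n-1}{k}}\ =\ n\prod_{k=1}^{n-2}\frac{n}{n-k}\ =\ \frac{n^{n-1}}{(n-1)!},
\]
which is the asserted bound. Thus everything reduces to showing that $\phi$ is an isomorphism.

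To do so, I would first note that $\phi$ is $\SL_n$-equivariant: by \eqref{eqRKFdefined} it is assembled from the contraction $V^*\otimes V\to\k$ and the exterior multiplications $V\otimes\Lambda^kV\to\Lambda^{k+1}V$, which are $\operatorname{GL}_n$-equivariant, together with $\det_n\in V^{\otimes n}$, which is $\SL_n$-invariant. Since $\Lambda^nV$ is the trivial $\SL_n$-module we have $V^*\cong\Lambda^{n-1}V$ as $\SL_n$-modules, and therefore both the source and the target of $\phi$ are isomorphic, as $\SL_n$-modules, to $\Lambda^1V\otimes\Lambda^2V\otimes\cdots\otimes\Lambda^{n-1}V$. (This is exactly why the tuple $(1,2,\ldots,n-2)$ is the right one: for a general choice of $(p_1,\ldots,p_{n-2})$ the two sides fail to be isomorphic as $\SL_n$-modules, and this route collapses.) Because $\ker\phi$ is an $\SL_n$-submodule and the source and target have equal dimension, Schur's lemma reduces the problem to showing that $\phi$ is injective (equivalently, surjective).

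The injectivity is the crux, and the obstacle is that for $n\geq 4$ the module $\Lambda^1V\otimes\cdots\otimes\Lambda^{n-1}V$ is far from multiplicity free, so it is not enough to check that $\phi$ is nonzero on each irreducible constituent --- one must control the induced endomorphisms of the multiplicity spaces. Here \Cref{lem:rhoIsom} is the essential input: the strategy is to factor $\phi$, after the identification $V^*\cong\Lambda^{n-1}V$, as a composition of elementary ``one-box transfer'' maps $\Lambda^{a}V\otimes\Lambda^{a+1}V\to\Lambda^{a+1}V\otimes\Lambda^{a}V$ on pairs of tensor factors, each of which is built from the canonical comultiplication $\Lambda^{a+1}V\to V\otimes\Lambda^{a}V$ and multiplication $\Lambda^{a}V\otimes V\to\Lambda^{a+1}V$ and hence is, up to transposing the two factors, one of the maps $\rho_{a}$ --- an isomorphism by \Cref{lem:rhoIsom}. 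Concretely, one moves the $n-2$ surplus boxes of the last factor $\Lambda^{n-1}V$ down through $\Lambda^{n-2}V,\Lambda^{n-3}V,\ldots,\Lambda^{1}V$ one at a time, exhibiting $\phi$ as a composition of isomorphisms (and, if that rewriting turns out to be awkward, one falls back to computing $\phi$ directly on a basis of highest-weight vectors in each isotypic component, in the spirit of the proof of \Cref{lem:rhoIsom}). The genuinely technical point is the bookkeeping: matching the single global alternating sum $\sum_{\sigma\in\mathfrak{S}_n}\sgn(\sigma)(\cdots)$ defining $\phi$ with the iterated comultiplication/multiplication maps, keeping track of the signs from $\sgn(\sigma)$ against those in the exterior algebra, so that the composite really is $\phi$ and not merely an $\SL_n$-equivariant map with the same source and target.
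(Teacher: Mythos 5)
Your proposal takes essentially the same route as the paper: after the identical dimension count, the paper also identifies $V^*\cong\Lambda^{n-1}V$ (and tensors with $\Lambda^nV$) and factors the recursive Koszul flattening $(\det_n)_V^{\wedge(1,2,\ldots,n-2)}$ as a composition of maps $\widetilde{\rho}_k$ induced from the $\rho_k$ of \Cref{lem:rhoIsom}, which is exactly your one-box-transfer factorization, with the commutativity checked by reordering contractions and wedge products just as you describe. The only detail you leave implicit is the descent from $\mathbb{C}$ (where \Cref{lem:rhoIsom} is stated) to an arbitrary field of characteristic zero, which the paper settles by noting the flattening matrix has entries $\pm1$, so its determinant is the same integer over any such field.
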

\begin{proof}
	First, we prove the bound over $\c$. Note that we have natural isomorphisms $V^* \cong \Lambda^{n-1}V$ and $\Lambda^nV \cong \c$ as $\SL_n$-representations. In addition, taking a tensor product with $\Lambda^n V \cong \c$ gives an isomorphism between $\SL_n$-representations. By rearranging the components, the recursive Koszul flattening of $\det_n$ with respect to $(1,2, \ldots, n-2)$ can be considered as $(\det_n)_V^{\wedge (1,2,\cdots,n-2)}:\Lambda^nV \otimes V \otimes \Lambda^2V \otimes \cdots \otimes \Lambda^{n-1}V \to V \otimes  \Lambda^2V\otimes \Lambda^3V\otimes \cdots  \otimes \Lambda^nV $. We claim that it is an isomorphism.
	
	Let
	\begin{align*}
		\widetilde{\rho}_{k}:\ &\Lambda^{k+1}V \otimes (V \otimes \cdots \otimes \Lambda^{k-1}V)\otimes \Lambda^kV \otimes \widehat{\Lambda^{k+1}V} \otimes (\Lambda^{k+2} V \otimes \cdots\otimes \Lambda^n V) \\ 
		&\to \Lambda^{k}V \otimes (V \otimes \cdots \otimes \Lambda^{k-1}V) \otimes \widehat{\Lambda^k V} \otimes \Lambda^{k+1}V\otimes  (\Lambda^{k+2}V \otimes \cdots \otimes \Lambda^nV)
	\end{align*}
	be the map induced from $\rho_{k}:\Lambda^{k+1}V \otimes \Lambda^k V \to \Lambda^k V \otimes \Lambda^{k+1}V$. In other words, $\widetilde{\rho}_{k}$ is the tensor product of the map $\rho_k$ and identity maps on $V \otimes \cdots \otimes \Lambda^{k-1}V$ and $\Lambda^{k+2} V \otimes \cdots\otimes \Lambda^n V$.
	%Define $\alpha: \wedge^n V \otimes (\wedge^{n-1} V \otimes \wedge^{n-2} V \otimes \cdots \otimes V) \to (V \otimes \cdots \otimes V) \otimes (\wedge^{n-1}V \otimes \wedge^{n-2}V \otimes \cdots \otimes V)$ as the map induced from the natural map $\wedge^n V \to V \otimes \cdots \otimes V$ so that $e_1 \wedge \cdots \wedge e_n \mapsto \det_n$.
	We define $\alpha : \Lambda^n V \to V \otimes \cdots \otimes V$ be the natural map such that $e_1 \wedge \cdots \wedge e_n \mapsto \det_n$, and define $\beta: (V \otimes \cdots \otimes V) \otimes V \otimes \Lambda^{2}V \otimes \cdots \otimes \Lambda^{n-1}V \to V \otimes \Lambda^{2}V \otimes \Lambda^{3}V \otimes \cdots \otimes \Lambda^n V$ as the tensor product of the identity map on $V_1$ and the wedge product maps $V_{k}\otimes \Lambda^{k-1}V\to\Lambda^{k}V$ for $k=2,\cdots,n$ where $V_i$ denotes the $i$-th factor of the domain. Then we have the following diagram.
	
	{\[\begin{tikzcd}
			\Lambda^nV\otimes V\otimes \cdots \otimes \Lambda^{n-2}V\otimes \Lambda^{n-1}V \arrow[d,swap,"\widetilde{\rho}_{n-1}"]\arrow[rdd,"
			 \alpha \otimes id"]&\\
			\Lambda^{n-1}V\otimes V\otimes \cdots \otimes \Lambda^{n-2}V\otimes \Lambda^{n}V \arrow[d,swap,"\widetilde{\rho}_{n-2}"]&\\
			\Lambda^{n-2}V\otimes V\otimes \cdots \otimes \Lambda^{n-1}V\otimes \Lambda^{n}V \arrow[d,swap,"\widetilde{\rho}_{n-3}"]& (V \otimes \cdots \otimes V) \otimes V\otimes \cdots \otimes \Lambda^{n-2}V\otimes \Lambda^{n-1}V\arrow[ldd,"\beta"]\\
			\vdots \arrow[d,swap,"\widetilde{\rho}_{1}"]&\\
			V\otimes \Lambda^{2}V\otimes \cdots \otimes \Lambda^{n-1} V\otimes \Lambda^{n} V&
		\end{tikzcd}\]}
	The commutativity can be seen by reordering the contraction maps and wedge product maps.
	By definition, $\beta \circ (\alpha \otimes \id)$ is the recursive Koszul flattening of $\det_n$. It is an isomorphism as each $\widetilde{\rho}_{k}$ is an isomorphism (\Cref{lem:rhoIsom}), in particular, 
	\[
	\brk(\det_n)\geq \frac{n\prod_{i=1}^{n-2}\binom{n}{i}}{\prod_{i=1}^{n-2}\binom{n-1}{i}}=\frac{n^{n-1}}{(n-1)!}.
	\]
	
	For a field $\k$ of characteristic zero, let $M_n(\k)$ be the matrix over $\mathbb{K}$ corresponding to the recursive Koszul flattening of $\det_n$ with respect to the bases $\mathcal{B}_1$ and $\mathcal{B}_2$.
	Then the nonzero entries of $M_n(\k)$ are either $1$ or $-1$, and even if the field changes, the positions of $1$ and $-1$ do not change. Hence, both $\det (M_n(\k))$ and $\det (M_n(\c))$ correspond to the same integer. As the latter one is shown to be nonzero, we have the same result for $\k$.
\end{proof}

\begin{rem}
	From Stirling’s approximation, we have $\frac{n^{n-1}}{(n-1)!} \approx \frac{e^n}{\sqrt{2\pi n}}$ for sufficiently large $n$. Hence, when $\cha({\k})=0$, the lower bound $\frac{n^{n-1}}{(n-1)!}$ of $\rk(\det_n)$ increases much faster than the upper bound $2^{n-1}$ of $\rk(\per_n)$ as $n$ increases. In particular, our lower bound on $\rk (\det_n)$ distinguishes $\det_n$ and $\per_n$ by their tensor ranks for every $n \ge 3$, and the $\rk (\det_n)$ increases much faster than $\rk (\per_n)$ so that the ratio are at least exponential in $n$. 
	
	Recently, Houston, Goucher, and Johnston reported that $\rk (\det_n) \le B_n$ where $B_n$ is the $n$-th Bell number \cite[Theorem 1]{MR4822414}. Thanks to the asymptotic estimation for the Bell number 
\[
	\left( \frac{n}{e \ln n} \right)^n < B_n < \left( \frac{n}{e^{1-\epsilon} \ln n} \right)^n
\]
which can be derived from de Bruijn's formula \cite{MR99564}, the best-known upper bound on $\rk (\det_n)$ at this moment is not of polynomial growth in $2^{n-1}$, the best-known upper bound on $\rk (\per_n)$, whereas the best-known lower bound in \Cref{thm:DetLower} has a polynomial growth in $2^{n-1}$. In a viewpoint of algebraic complexity theory, it is very tempting to ask the following question. 

\begin{question}
Does the tensor rank $\rk (\det_n)$ grow faster than any polynomial in $\rk (\per_n)$?
\end{question}

\end{rem}

As an immediate consequence of \Cref{thm:DetLower}, we get $\brk(\det_4)\geq 11$ and $\brk(\det_5)\geq 27$ when $\cha(\k)=0$. When the characteristic is finite, we get the following result.

\begin{thm}\label{thm:det4FiniteChar}
	When $\cha(\k)\geq 3$, it holds that $\brk(\det_4)\geq 11$, and when $\cha(\k)\geq 5$, it holds that $\brk(\det_5)\geq 27$.
\end{thm}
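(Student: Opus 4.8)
The plan is to reuse the integer matrix underlying the proof of \Cref{thm:DetLower}. For $n\in\{4,5\}$ the recursive Koszul flattening $(\det_n)_V^{\wedge(1,2,\ldots,n-2)}$ is a square map: its source and target both have dimension $96$ (for $n=4$) and $2500$ (for $n=5$), since $\binom{n}{i}=\binom{n}{n-i}$. In the bases $\mathcal{B}_1,\mathcal{B}_2$ its matrix $M_n(\k)$ has all entries in $\{0,\pm 1\}$ with field-independent positions, so $\det M_n(\k)$ is the reduction modulo $\cha(\k)$ of the integer $\det M_n(\z)$. By the rank method (\Cref{MainPropRecursive}), once $M_n(\k)$ has full rank we obtain $\brk(\det_4)\geq\lceil 96/(\binom{3}{1}\binom{3}{2})\rceil=11$ and $\brk(\det_5)\geq\lceil 2500/(\binom{4}{1}\binom{4}{2}\binom{4}{3})\rceil=27$. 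Hence it suffices to show $\cha(\k)\nmid\det M_n(\z)$, and the heart of the argument is to pin down the prime factorization of $\det M_n(\z)$.

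The first step is to factor the flattening. The commutative diagram in the proof of \Cref{thm:DetLower} expresses $M_n$, up to the natural isomorphisms $V^*\cong\Lambda^{n-1}V$ and $\Lambda^nV\cong\k$ (which are unimodular in the standard bases) and a permutation of tensor factors, as the composite $\widetilde{\rho}_{1}\circ\cdots\circ\widetilde{\rho}_{n-1}$ with $\widetilde{\rho}_k=\rho_k\otimes\id$. Therefore $\det M_n(\z)$ equals, up to sign, a product of powers of $\det\rho_1,\ldots,\det\rho_{n-1}$; in particular every prime dividing $\det M_n(\z)$ divides some $\det\rho_k$, where here $\rho_k\colon\Lambda^{k+1}V\otimes\Lambda^kV\to\Lambda^kV\otimes\Lambda^{k+1}V$ with $V=\k^n$.

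The second step bounds the prime divisors of $\det\rho_k$. Over $\mathbb{Q}$ this is the $\SL_n$-equivariant map of \Cref{lem:rhoIsom}, and by Pieri's formula both sides decompose as $\bigoplus_{\lambda}\s_\lambda V$ over the partitions $\lambda=(2^m,1^{2k-2m+1})$, $0\leq m\leq k$, with $\s_\lambda V=0$ unless $\lambda$ has at most $n$ rows, that is unless $m\geq 2k+1-n$. The computation performed inside the proof of \Cref{lem:rhoIsom} shows that, after choosing $\z$-bases adapted to this isotypic decomposition on both sides, $\rho_k$ acts on the $\s_\lambda V$-summand by the scalar $(k-m+1)!$. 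Hence $\det\rho_k=\pm\prod_m\bigl((k-m+1)!\bigr)^{\dim\s_\lambda V}$, and since $k-m+1$ ranges over $\{1,2,\ldots,\min(k+1,n-k)\}$, every prime dividing $\det\rho_k$ divides $\bigl(\min(k+1,n-k)\bigr)!$. Specializing: for $n=4$ we have $\min(k+1,4-k)\leq 2$ for every $k$, so $\det M_4(\z)=\pm 2^{a}$; for $n=5$ we have $\min(k+1,5-k)\leq 3$, so $\det M_5(\z)=\pm 2^{a}3^{b}$. Consequently $M_4(\k)$ has full rank whenever $\cha(\k)\geq 3$ and $M_5(\k)$ has full rank whenever $\cha(\k)\geq 5$, which yields the two claimed bounds.

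The step I expect to be the main obstacle is the normalization claim in the second step: that the scalar of $\rho_k$ on the $\s_\lambda V$-component is exactly $(k-m+1)!$ \emph{up to a unit}, i.e.\ that integral bases can be chosen on both sides so that the change-of-basis matrices to an isotypic-adapted basis are unimodular and no spurious prime divisors are introduced into $\det\rho_k$. A completely safe alternative, available because explicit code producing $M_n$ in the bases $\mathcal{B}_1,\mathcal{B}_2$ is already at hand, is to compute $\det M_4(\z)$ and $\det M_5(\z)$ directly by a multimodular determinant algorithm, verify that they are $\pm 2^{a}$ and $\pm 2^{a}3^{b}$ respectively, and conclude exactly as above.
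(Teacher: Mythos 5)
Your reduction is sound and your ``safe alternative'' is in fact exactly the paper's proof: the paper defines the same integer matrix $M_n(\k)$ with entries in $\{0,\pm1\}$ at field-independent positions, notes $\det M_n(\k)\equiv\det M_n(\q)\pmod p$, and then simply reports the computed values $\det M_4(\q)=2^{32}$ and $\det M_5(\q)=2^{1600}3^{25}$, which give full rank for $\cha(\k)\geq 3$ resp.\ $\geq 5$ and hence the bounds $\lceil 96/9\rceil=11$ and $\lceil 2500/96\rceil=27$. Your main, non-computational route (factor $M_n$ through the maps $\widetilde{\rho}_k$ of \Cref{thm:DetLower} and control the primes of $\det\rho_k$ representation-theoretically) is genuinely different and more informative, but as written it has two problems.

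First, the scalar is wrong. The proof of \Cref{lem:rhoIsom} does not show that $\rho_k$ acts on the $\s_\lambda V$-summand by $(k-m+1)!$; the factorial there is an artifact of routing through $V^{\otimes k-m+1}$ (it counts the $(k-m+1)!$ orderings produced by $p_1$), and the lemma only extracts nonvanishing after a further projection. The actual scalar is $\pm(k-m+1)$: for $m=0$ one checks directly that the image of $e_1\wedge\cdots\wedge e_{2k+1}$ picks up the coefficient $k+1$, and in general this is the matrix coefficient of the lowering operator between the weight $+1$ and weight $-1$ spaces in the skew Howe $\mathfrak{sl}_2$-action on $\bigoplus_{a+b=2k+1}\Lambda^aV\otimes\Lambda^bV$. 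Your factorial prediction would give $\det M_5(\z)=\pm 2^{1625}3^{25}$, contradicting the computed $2^{1600}3^{25}$, whereas the corrected scalars reproduce both $2^{32}$ and $2^{1600}3^{25}$ exactly. Fortunately $k-m+1$ ranges over $1,\ldots,\min(k+1,n-k)$, so the prime support is the same as in your argument and the conclusion of the theorem is unaffected.

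Second, the normalization gap you flag is real as stated but avoidable: you do not need integrally adapted bases at all. Compose $\rho_k$ with the factor swap $\tau\colon\Lambda^kV\otimes\Lambda^{k+1}V\to\Lambda^{k+1}V\otimes\Lambda^kV$, which is a signed permutation matrix in the monomial bases (so $\det\tau=\pm1$) and is $\operatorname{GL}(V)$-equivariant. Then $\tau\circ\rho_k$ is an equivariant endomorphism of the multiplicity-free module $\Lambda^{k+1}V\otimes\Lambda^kV$, so by Schur's lemma its determinant is basis-independent and equals $\prod_m c_m^{\dim\s_\lambda V}$ with $c_m=\pm(k-m+1)$ computable on a single (e.g.\ highest weight) vector over $\q$; no lattice or unimodularity issues arise. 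Combined with the observation that the identifications $V^*\cong\Lambda^{n-1}V$, $\Lambda^nV\cong\k$ and the reorderings of tensor factors are signed permutations in the standard bases, this yields $\det M_n(\z)=\pm\prod_k(\det\rho_k)^{d_k}$ and hence the asserted prime support, giving a conceptual proof of \Cref{thm:det4FiniteChar} (and, with the corrected scalars, even the exact values the paper obtains by machine).
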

\begin{proof}
	We define $M_n(\k)$ similarly as in the proof of \Cref{thm:DetLower}.
	By the same reason, when $\cha(\k)=p>0$, it holds that $\det (M_n(\k))\equiv \det (M_n(\q)) \pmod p$. 
	A computational result shows that $\det (M_4(\q))=2^{32}$ when $n=4$. When $n=5$, we get $\det (M_5(\q))=2^{1600} 3^{25}$, and hence the result follows. The implementation can be found in \cite{hanjukim2024appendix}.
\end{proof}

Now we proceed to determine the exact tensor rank of $\det_4$.
The following lemma is used when Krishna-Makam showed $\rk(\det_3)=5$ over the fields of characteristic two \cite{MR4284788}. Their proof uses a fact that $\det_3$ is invariant under the $(\SL_3\times C_3)$-action where $C_3$ denotes the cyclic group of order $3$ so that any simple tensor can be written simply. We use a similar argument to show $\rk(\det_4)=12$ together with the recursive Koszul flattening.

\begin{lem}[{\cite[Lemma 3.6]{MR4284788}}]\label{lem:minusRankOne}
	Let $T\in V^{\otimes n}$ be a tensor such that $\rk(T-S)\geq r$ for all tensors $S\in V^{\otimes n}$ of rank one. Then $\rk(T)\geq r+1$.
\end{lem}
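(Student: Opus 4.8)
The plan is to prove the contrapositive: if $\rk(T) \le r$, then there exists a rank-one tensor $S \in V^{\otimes n}$ with $\rk(T-S) \le r-1$. Suppose $\rk(T) = r' \le r$, so we may write $T = \sum_{i=1}^{r'} v_{i,1} \otimes v_{i,2} \otimes \cdots \otimes v_{i,n}$ as a sum of $r'$ decomposable tensors. If $r' = 0$ the statement is trivial (take $S=0$ if one allows rank $\le 1$, or note the hypothesis is vacuous), so assume $r' \ge 1$. Then set $S := v_{1,1} \otimes v_{1,2} \otimes \cdots \otimes v_{1,n}$, which is a tensor of rank $\le 1$. Immediately $T - S = \sum_{i=2}^{r'} v_{i,1} \otimes \cdots \otimes v_{i,n}$ is a sum of $r'-1$ decomposable tensors, so $\rk(T-S) \le r' - 1 \le r-1$. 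This contradicts the hypothesis that $\rk(T-S) \ge r$ for every rank-one $S$. Hence $\rk(T) \ge r+1$.

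The argument is essentially immediate from the definition of tensor rank as the minimal length of a decomposition into decomposable tensors, together with the trivial observation that deleting one summand from a length-$k$ decomposition yields a length-$(k-1)$ decomposition of the difference. There is no serious obstacle here; the only point requiring a word of care is the degenerate case $\rk(T) = 0$, i.e.\ $T = 0$, where one should check the convention that the zero tensor has rank $0 \le 1$ and can itself be written as a rank-one (indeed zero) tensor, so that $T - S = 0$ has rank $\le -1$ vacuously forces the hypothesis to fail unless $r \le 0$; in the intended applications $r \ge 1$ and $T \ne 0$, so this case does not arise. One could alternatively phrase the whole lemma directly (not via contrapositive): given any decomposition of $T$ of minimal length $\rk(T)$, subtracting its first term exhibits $\rk(T) - 1 \ge \rk(T - S)$ for that particular rank-one $S$; combined with the hypothesis $\rk(T-S) \ge r$ this gives $\rk(T) \ge r+1$ directly.

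I expect this to be the shortest proof in the paper, and its role is purely as a bookkeeping device: it converts a lower bound that holds after subtracting an arbitrary rank-one tensor — the kind of statement one can feed into the recursive Koszul flattening machinery applied uniformly to the family $\{T - S\}$ — into a genuine lower bound on $\rk(T)$ itself, which is what will be used in the proof that $\rk(\det_4) = 12$.
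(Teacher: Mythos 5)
Your proposal is correct and follows exactly the paper's argument: the paper also assumes a decomposition $T=S_1+\cdots+S_r$ into rank-one tensors and observes that $\rk(T-S_1)\leq r-1$ contradicts the hypothesis. The extra remarks about the degenerate case $T=0$ are fine but not needed.
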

\begin{proof}
	If $T=S_1+\cdots+S_r$ for some rank one tensors $S_1,\cdots,S_r$, then we get a contradiction as $\rk(T-S_1)\leq r-1$.
\end{proof}

\begin{thm}\label{thm:TensorRankDet4}
	When $\cha(\k) \neq 2$, we have $\rk(\det_4)=12$.
\end{thm}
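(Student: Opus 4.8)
The plan is to prove the equality $\rk(\det_4)=12$ by establishing the two inequalities separately. For the upper bound $\rk(\det_4)\le 12$, I would invoke the explicit decomposition of $\det_4$ as a sum of $12$ decomposable tensors given in \cite[Theorem 4.1]{MR4847254}; since that formula works over an arbitrary field of characteristic $\neq 2$, this half is immediate. The substance of the theorem is the lower bound $\rk(\det_4)\ge 12$, and here I would combine the border rank estimate coming from the recursive Koszul flattening with the rank-drop trick of \Cref{lem:minusRankOne}. Indeed, \Cref{thm:DetLower} together with \Cref{thm:det4FiniteChar} already gives $\brk(\det_4)\ge 11$, hence $\rk(\det_4)\ge 11$, over any field with $\cha(\k)\neq 2$. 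So it remains only to upgrade $11$ to $12$.

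The key step is to apply \Cref{lem:minusRankOne} with $r=11$: it suffices to show that for \emph{every} rank-one tensor $S\in V^{\otimes 4}$, one has $\rk(\det_4 - S)\ge 11$, equivalently $\brk(\det_4 - S)\ge 11$ would be more than enough, but in fact I expect we need the border-rank bound to survive the subtraction of $S$. The strategy is to run the recursive Koszul flattening $(\det_4-S)_V^{\wedge(1,2)}$ and show its rank, divided by $\binom{3}{1}\binom{3}{2}=9$, still rounds up to at least $11$, i.e. the matrix has rank at least $91$. Here I would exploit the symmetry: $\det_4$ is invariant under $\SL_4\times C_4$ (the cyclic permutation of tensor factors), so by a change of coordinates one may assume the rank-one tensor $S$ has a normalized form — for instance $S = w_1\otimes w_2\otimes w_3\otimes w_4$ where the $w_i$ can be partially standardized using the $\SL_4$-action on each factor (compatibly across factors via the diagonal and the cyclic symmetry), reducing to finitely many or a low-dimensional family of cases. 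For each such normalized $S$, the recursive Koszul flattening matrix $M_4-M_S$ has entries that are polynomials in the few remaining parameters, and one checks (by an explicit computation, e.g. exhibiting a nonvanishing $91\times 91$ minor, or by a rank argument over the function field) that its rank is at least $91$, forcing $\brk(\det_4-S)\ge\lceil 91/9\rceil = 11$.

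An alternative, possibly cleaner route for the lower bound: rather than normalizing $S$, observe that $\det_4 - S$ for generic $S$ should still have the recursive Koszul flattening of full-enough rank because the locus of $S$ where the rank drops below $91$ is Zariski closed, and one only needs to check it is a proper subvariety and then handle the boundary cases by the same symmetry reduction. Combined with the known $\brk(\det_4)\ge 11$ from \Cref{thm:DetLower}/\Cref{thm:det4FiniteChar} as the base case, \Cref{lem:minusRankOne} then yields $\rk(\det_4)\ge 12$, and with the upper bound from the explicit formula this gives $\rk(\det_4)=12$.

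The main obstacle I anticipate is the case analysis for the rank-one subtraction: showing $\rk(\det_4-S)\ge 11$ uniformly in $S$ requires controlling how much the rank of the $\binom{4}{1}\binom{4}{2}\cdot 4$-sized flattening matrix can decrease when a single rank-one tensor is removed, and verifying this either needs a well-chosen symmetry normalization of $S$ (using the $\SL_4\times C_4$-invariance to cut the parameter space down to something manageable) or a careful computer-assisted check that a suitable large minor does not vanish identically on the relevant parameter space. Making this rigorous over fields of arbitrary characteristic $\neq 2$ — as in \Cref{thm:det4FiniteChar}, by reducing the determinant of the relevant integer matrix modulo $p$ and checking it is a unit, i.e. a power of $2$ — is where the delicate bookkeeping lies.
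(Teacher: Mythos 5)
Your proposal follows essentially the same route as the paper: the upper bound from \cite[Theorem 4.1]{MR4847254}, and the lower bound via \Cref{lem:minusRankOne} with $r=11$, normalizing the rank-one tensor $S$ by the symmetry of $\det_4$ and checking computationally that the $96\times 96$ recursive Koszul flattening $(\det_4-S)_V^{\wedge(1,2)}$ has rank at least $91$ in every case (with explicit minors reduced mod $p$ in positive characteristic). The only slip is that the factor-permutation symmetry preserving $\det_4$ is $\mathfrak{A}_4$ rather than $C_4$ (a $4$-cycle flips the sign), but this does not affect the substance of the normalization argument, which the paper carries out by cases on $\dim\langle s_1,s_2,s_3,s_4\rangle$.
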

\begin{proof}
	Assume first that $\cha(\k)=0$. 
	By \Cref{lem:minusRankOne}, it is enough to show $\rk(\det_4-S)\geq 11$ for every tensor $S=s_1 \otimes s_2 \otimes s_3 \otimes s_4$ of rank one. We denote the $96 \times 96$ matrix corresponding to the recursive Koszul flattening of $\det_4-S$ with respect to $(1,2)$ by $M$. As $\det_4$ is invariant under the natural $(\SL_4\times \mathfrak{A}_4)$-action, we divide into four cases.
	
	Suppose $\dim \langle s_1,s_2,s_3,s_4 \rangle=4$. Then there is an element $(g,1) \in \SL_4\times \mathfrak{A}_4$ such that $(g,1)\cdot S=x(e_1\otimes e_2\otimes e_3\otimes e_4)$ for some $x\in\k$, i.e., $(g,1)\cdot(\det_4-S)=\det_4-x(e_1\otimes e_2\otimes e_3\otimes e_4)$. As the $(\SL_4\times \mathfrak{A}_4)$-action preserves the rank, we may assume $S=x (e_1\otimes e_2\otimes e_3\otimes e_4)$.
	In this case, we observe that $\det (M)=-2^{20}(x-1)(x-2)^4(x-4)^4$. Therefore, the matrix $M$ has rank $96$ so that $\brk(\det_4-S)\geq \frac{96}{9}$ when $x\neq 1,2,4$. When $x=1$, the rank of $M$ drops to $\rank (M)=95$, hence $\brk(\det_4-S) \geq \frac{95}{9}$. Similarly, when $x=2$, we get $\brk(\det_4-S)\geq \frac{93}{9}$ as $\rank (M)=93$, and when $x=4$, we get $\brk(\det_4-S)\geq \frac{92}{9}$ as $\rank (M)=92$. 
	
	Suppose $\dim\langle s_1,s_2,s_3,s_4\rangle=3$. Then there is an element $(g,\sigma)\in \SL_4\times \mathfrak{A}_4$ such that $(g,\sigma)S=e_1\otimes e_2\otimes e_3\otimes(xe_1+ye_2+ze_3)$ for some $x,y,z\in \k$. Thus in this case, we may assume $S=e_1\otimes e_2\otimes e_3\otimes(xe_1+ye_2+ze_3)$. In this case, we get $\det (M)=2^{32}$. Hence $\brk(\det_4-S)\geq \frac{96}{9}$.
	
	Suppose $\dim\langle s_1,s_2,s_3,s_4\rangle=2$. In this case, we may assume $S=e_1\otimes e_2\otimes (xe_1+ye_2)\otimes (ze_1+we_2)$. As before, we get $\det (M)=2^{32}$ and $\brk(\det_4-S)\geq \frac{96}{9}$.
	
	Suppose $\dim\langle s_1,s_2,s_3,s_4\rangle=1$. In this case, we may assume $S=x(e_1\otimes e_1\otimes e_1\otimes e_1)$. As before, we get $\det (M)=2^{32}$ and $\brk(\det_4-S)\geq \frac{96}{9}$. In any cases, we conclude that $\brk (\det_4 - S) \ge 11$ which implies the bound $\mathbf{R} (\det_4) \ge 12$.
	
	Now assume $\cha (\k)=p\geq 3$. We proceed as above. When $\dim \langle s_1,s_2,s_3,s_4\rangle\leq 3$, we still have $\brk(\det_4-S)\geq \frac{96}{9}$ since $\det (M)\equiv 2^{32} \pmod p$. Hence we consider the case $\dim \langle s_1,s_2,s_3,s_4\rangle=4$ only and we may assume $S=x(e_1\otimes e_2\otimes e_3\otimes e_4)$. Then $M$ is invertible unless $x=1,2,4$ since the roots of $\det (M)$ are 1, 2, and 4 when $\cha (\k)>3$ while 1 and 2 are the only roots when $\cha (\k)=3$. We could find two $91\times 91$ submatrices $M'$ and $M''$ of $M$ so that $\det (M')\equiv 2^{20}\pmod p$ when $x=1,4$ and $\det (M'')\equiv 2^{23}\pmod p$ when $x=2$. This shows $\brk(\det_4-S)\geq \frac{91}{9}$ in any case. 
	
	It is known that $\mathbf{R} (\det_4) \le 12$ thanks to \cite[Theorem 4.1]{MR4847254} unless $\cha(\k) =2$, we have the exact value of the tensor rank $\mathbf{R} (\det_4) = 12$. We refer to \cite{hanjukim2024appendix} for the implementation of these computations.
\end{proof}

Note that over the field $\mathbb{F}_2$ of two elements, it is known that $\rk(\det_4)=12$ due to \cite[Theorem 5]{MR4822414}.
Together with \Cref{thm:TensorRankDet4}, the only case left for $\rk(\det_4)$ is over a field $\k$ of characteristic $2$ and $\k \neq \mathbb{F}_2$. We expect that $\rk(\det_4)=12$ also in this case, however, we leave it as an open question.

Due to \Cref{thm:DetLower} and \Cref{thm:det4FiniteChar}, we have $11\leq\brk(\det_4)\leq 12$ when $\cha(\k)\neq 2$. It is natural to consider the following question:

\begin{question}
	Does the border rank $\brk(\det_4)$ equal to $12$?
\end{question}

In the case of $\mathbb{K}=\mathbb{C}$, we obtain an affirmative answer using border apolarity theory developed in \cite{MR4332674, MR4595287}, and it will appear in our upcoming preprint \cite{hanjukim2025border}.

%-------------------------------------------------------------------------------------------------------

\section{Permanent Tensors}\label{Sect:Permanent Tensors}
Unlike the determinant tensor $\det_n$, the permanent tensor $\per_n$ is not $\SL_n$-invariant in general, so the purpose of this section is to address computational and experimental results on lower bounds of $\underline{\mathbf{R}} (\per_n)$ for small values of $n$.  

In practice, we may expect a better lower bound achieved by a rank method when we compare a given tensor with the rank of a square matrix. Hence, as in the case of determinant tensor, we also design our experiments so that we observe the rank of a square matrix representation of $(\per_n)^{\wedge(1,2,\cdots,n-2)}_V:\Lambda^1 V\otimes \Lambda^2 V \otimes \cdots\otimes \Lambda^{n-2}V\otimes V^* \to \Lambda^2 V\otimes \Lambda^3 V \otimes \cdots\otimes \Lambda^{n-1}V\otimes V$ although most of the contents in this section can be applied to the other recursive Koszul flattenings $(\per_n)^{\wedge(p_1,p_2,\cdots,p_{s})}_V$. 

Since the size of the matrices becomes huge when we increase $n$, we introduce some useful tools to calculate their rank; for instance, we need to calculate the rank of a $26471025 \times 26471025$ matrix when $n=7$ which is a heavy computation in practice. Fortunately, the corresponding recursive Koszul flattening map is represented by a sparse matrix, and the permanent tensor $\per_n$ is invariant under the natural $\mathfrak{S}_n$-action. This observation gives a certain symmetry on $(\per_n)^{\wedge(p_1,p_2,\cdots,p_{s})}_V$ that may reduce the computational cost a lot. 

Consider the bases $\mathcal{B}_1, \mathcal{B}_2$ we defined in the previous section. For $\sigma\in\mathfrak{S}_n$ and $x=e_{i_{1,1}}\otimes (e_{i_{2,1}}\wedge e_{i_{2,2}})\otimes \cdots \otimes (e_{i_{n-2,1}}\wedge \cdots\wedge e_{i_{n-2,n-2}})\otimes e_{i_{n-1}}^*\in\mathcal{B}_1$, we define $\sigma(x):=e_{\sigma(i_{1,1})}\otimes (e_{\sigma(i_{2,1})}\wedge e_{\sigma(i_{2,2})})\otimes \cdots \otimes (e_{\sigma(i_{n-2,1})}\wedge \cdots\wedge e_{\sigma(i_{n-2,n-2})})\otimes e_{\sigma(i_{n-1})}^*$. By spanning it linearly, we get an $\mathfrak{S}_n$-action on $\Lambda^1 V\otimes \Lambda^2 V \otimes \cdots\otimes \Lambda^{n-2}V\otimes V^*$ which is actually a representation. Note that when $x\in\mathcal{B}_1$, either $\sigma(x)$ or $-\sigma(x)$ is contained in $\mathcal{B}_1$. We denote the one contained in $\mathcal{B}_1$ as $|\sigma(x)|$.
We similarly define an $\mathfrak{S}_n$-action on $\Lambda^2 V\otimes \Lambda^3 V \otimes \cdots\otimes \Lambda^{n-1}V\otimes V$ that is a representation.

\begin{dfn}\label{DefConnectedComponent}
	Let $\phi:W_1\to W_2$ be a linear map. Fix the ordered bases $\mathcal{B}_1$ of $W_1$ and $\mathcal{B}_2$ of $W_2$, and let $M$ be the corresponding matrix.
	\begin{itemize}
		\item [(\romannumeral1)] For $a\in\mathcal{B}_1$ and $b\in\mathcal{B}_2$, we say $a$ and $b$ are \textit{adjacent} if $b$ has a nonzero coefficient in the linear combination of elements of $\mathcal{B}_2$ that represents $\phi(a)$.
		\item [(\romannumeral2)] For $x,y\in\mathcal{B}_1\cup\mathcal{B}_2$, we say $x$ and $y$ are \textit{connected} if there exists a sequence of elements $z_1,\cdots,z_n\in\mathcal{B}_1\cup\mathcal{B}_2$ such that $z_i$ is adjacent to $z_{i+1}$ for all $i$ where $z_1=x$ and $z_{n}=y$.
		\item [(\romannumeral3)] For an element $a\in \mathcal{B}_1$ and a submatrix $N$ of $M$, we say $a$ \textit{is contained in} $N$ if $N$ contains the column, or possibly just a part of the column, corresponding to $a$. We define similarly for the elements in $\mathcal{B}_2$.
		\item [(\romannumeral4)] We call a submatrix $N$ of $M$ is a \textit{connected component} if
	\begin{itemize}
		\item [$\bullet$] any pair of the rows and columns contained in $N$ are connected, and
		\item [$\bullet$] any row or column that is not contained in $N$ is not connected to any row or column contained in $N$.
	\end{itemize}
	When $a\in\mathcal{B}_1$ has an adjacent element, we denote the connected component containing $a$ as $M(a)$.
		\item [(\romannumeral5)] We say two matrices are \textit{essentially same} if one can be obtained by reordering the rows and columns and multiplying nonzero scalars to the rows and columns from the other.
	\end{itemize}
\end{dfn}

\begin{ex}
	Let $M$ be the matrix
	\[\begin{bNiceMatrix}[first-row,first-col]
		& a_1 & a_2 & a_3 & a_4 & a_5 & a_6 & a_7 \\
		b_1 & 1 & 0 & 0 & 2 & 0 & 0 & 0 \\
		b_2 & 0 & 3 & 0 & 0 & 0 & 0 & 4 \\
		b_3 & 0 & 0 & 0 & 5 & 0 & 6 & 0 \\
		b_4 & 0 & 0 & 0 & 0 & 0 & 0 & 0 \\
		b_5 & 0 & 0 & 0 & 0 & 7 & 0 & 8 \\
		b_6 & 0 & 0 & 0 & 0 & 0 & 9 & 0 \\
	\end{bNiceMatrix}\]
	correspondong to a linear map $\phi:W_1\to W_2$ with respect to ordered bases $\mathcal{B}_1=\{a_1,\cdots,a_7\}$ and $\mathcal{B}_2=\{b_1,\cdots,b_6\}$.
	Then its connected components are
	\[
	\begin{bNiceMatrix}[first-row,first-col]
		& a_1 & a_4 & a_6 \\
		b_1 & 1 & 2 & 0 \\
		b_3 & 0 & 5 & 6 \\
		b_6 & 0 & 0 & 9 \\
	\end{bNiceMatrix}
	\quad\text{ and }\quad
	\begin{bNiceMatrix}[first-row,first-col]
		& a_2 & a_5& a_7\\
		b_2 & 3 & 0 & 4 \\
		b_5 & 0 & 7 & 8 \\
	\end{bNiceMatrix}
	\]
	and the rank of $M$ is equal to the sum of the ranks of these.
	Note that $a_3$ is not adjacent to any element.
\end{ex}

\begin{lem}\label{lem:SnEquivariant}
	The recursive Koszul flattening $(\per_n)^{\wedge(1,2,\cdots,n-2)}_V$ is $\mathfrak{S}_n$-equivariant, i.e., for $\sigma\in\mathfrak{S}_n$ and $a\in \Lambda^1 V\otimes \Lambda^2 V \otimes \cdots\otimes \Lambda^{n-2}V\otimes V^*$, we have \[(\per_n)_V^{\wedge(1,2,\cdots,n-2)}(\sigma(a))=\sigma((\per_n)_V^{\wedge(1,2,\cdots,n-2)}(a)).\]
\end{lem}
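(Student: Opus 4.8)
The plan is to verify the claimed identity $(\per_n)_V^{\wedge(1,2,\cdots,n-2)}(\sigma(a))=\sigma((\per_n)_V^{\wedge(1,2,\cdots,n-2)}(a))$ directly on a spanning set, reducing everything to the $\mathfrak{S}_n$-invariance of $\per_n$ itself. First I would recall from the definition \eqref{eqRKFdefined} (with $d = n$, $s = n-2$, and all $V_i = V$) exactly how the map acts on a basis element $a=e_{i_{1,1}}\otimes (e_{i_{2,1}}\wedge e_{i_{2,2}})\otimes\cdots\otimes(e_{i_{n-2,1}}\wedge\cdots\wedge e_{i_{n-2,n-2}})\otimes e_{i_{n-1}}^*$. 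Writing $\per_n = \sum_{\tau\in\mathfrak{S}_n} e_{\tau(1)}\otimes\cdots\otimes e_{\tau(n)}$, the image is the sum over $\tau$ of $(e_{\tau(1)}\wedge e_{i_{1,1}})\otimes(e_{\tau(2)}\wedge e_{i_{2,1}}\wedge e_{i_{2,2}})\otimes\cdots\otimes(e_{\tau(n-2)}\wedge e_{i_{n-2,1}}\wedge\cdots\wedge e_{i_{n-2,n-2}})$ — a term in $\bigotimes_{j=1}^{n-2}\Lambda^{j+1}V$ — tensored with the scalar $e_{i_{n-1}}^*(e_{\tau(n-1)}) = \delta_{\tau(n-1),\,i_{n-1}}$ times $e_{\tau(n)}\in V$; the $V_j$-factors for $s+1 \le j \le d-2$ are empty here since $s=d-2$.

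Next, since both $\mathfrak{S}_n$-actions (on source and target) are defined by applying $\sigma$ to every subscript of every basis vector and extending linearly, I would compute $\sigma$ applied to the image above. Because $\sigma$ acts as an algebra map compatible with the wedge product — $\sigma(e_a\wedge e_b) = e_{\sigma(a)}\wedge e_{\sigma(b)}$ — and with tensor products, and because $\delta_{\tau(n-1),i_{n-1}} = \delta_{\sigma\tau(n-1),\,\sigma(i_{n-1})}$, the element $\sigma\big((\per_n)_V^{\wedge(1,\ldots,n-2)}(a)\big)$ becomes the sum over $\tau$ of the term built from $e_{\sigma\tau(k)}$ and $e_{\sigma(i_{j,l})}$. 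Reindexing the sum by $\tau' = \sigma\tau$ (a bijection of $\mathfrak{S}_n$, using that $\per_n$ is $\mathfrak{S}_n$-invariant so $\sum_\tau$ and $\sum_{\tau'}$ range over the same set with the same coefficients) turns this into exactly $(\per_n)_V^{\wedge(1,\ldots,n-2)}$ applied to the basis element with subscripts $\sigma(i_{j,l})$ and $\sigma(i_{n-1})$ in slot $n-1$, which is $\pm\sigma(a)$ depending on the sign picked up by reordering the wedge factors into increasing order. Tracking this sign carefully and noting it cancels on both sides (the map is linear, so $(\per_n)_V^{\wedge(\ldots)}(\pm\sigma(a)) = \pm(\per_n)_V^{\wedge(\ldots)}(\sigma(a))$, and the same sign appears when we wrote $\sigma(\text{image})$) gives the desired equality on $\mathcal{B}_1$, hence on all of $\Lambda^1V\otimes\cdots\otimes\Lambda^{n-2}V\otimes V^*$ by linearity.

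The main obstacle is purely bookkeeping: keeping the sign conventions straight when rewriting $\sigma(e_{i_{j,1}}\wedge\cdots\wedge e_{i_{j,j}})$ (whose subscripts need not be increasing after applying $\sigma$) back in terms of the ordered basis $\mathcal{B}_1$, and checking that the permutation of wedge factors $v_{i,k}\wedge(a_{k,1}\wedge\cdots\wedge a_{k,p_k})$ introduces no extra sign discrepancy between the two sides. Since $\sigma$ acts the same way on both the $V^{\otimes n}$ factors coming from $\per_n$ and on the $a$-factors, every sign that appears on the left-hand side has a mirror on the right-hand side, so the verification goes through; I would phrase the argument so that it never actually reorders wedges, but instead observes that $(\per_n)_V^{\wedge(\ldots)}$ and the two $\mathfrak{S}_n$-actions are all built from the same elementary operations (wedge product, tensor product, evaluation pairing), each of which is manifestly $\mathfrak{S}_n$-equivariant, so their composite is too — invoking $\mathfrak{S}_n$-invariance of $\per_n$ only to reindex the summation. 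This conceptual version avoids the sign grind entirely.
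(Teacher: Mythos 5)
Your proposal is correct and is exactly the calculation the paper has in mind: its proof of this lemma is just "verify by computing both sides," and your reindexing $\tau'=\sigma\tau$ (using that all coefficients of $\per_n$ are $+1$, which is precisely where the permanent differs from the determinant and its extra $\sgn(\sigma)$) together with the observation that any basis-normalization signs cancel by linearity is the intended verification.
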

\begin{proof}
	This can be verified easily by calculating the both sides.
\end{proof}

Note that $(\det_n)^{\wedge(1,2,\cdots,n-2)}_V$ is not $\mathfrak{S}_n$-equivariant but $\mathfrak{A}_n$-equivariant, and furthermore, we have $(\det_n)_V^{\wedge(1,2,\cdots,n-2)}(\sigma(a))=\sgn(\sigma)\cdot\sigma((\det_n)_V^{\wedge(1,2,\cdots,n-2)}(a))$.

\begin{lem}\label{lem:sigmaEssentiallySame}
	Let $M_n$ be the matrix corresponding to $(\per_n)^{\wedge(1,2,\cdots,n-2)}_V$ with respect to the ordered bases $\mathcal{B}_1$ and $\mathcal{B}_2$. Suppose $a\in\mathcal{B}_1$ has an adjacent element. Then for any $\sigma\in\mathfrak{S}_n$, the element $\sigma(a)$ also has an adjacent element, and the connected component $M_n(|\sigma(a)|)$ is essentially same as $M_n(a)$. Furthermore, the elements $a$ and $a'$ of $\mathcal{B}_1$ are contained in the same connected component if and only if $|\sigma(a)|$ and $|\sigma(a')|$ are contained in the same connected component.
\end{lem}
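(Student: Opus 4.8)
\textbf{Proof proposal for Lemma \ref{lem:sigmaEssentiallySame}.}
The plan is to use the $\mathfrak{S}_n$-equivariance of the recursive Koszul flattening (\Cref{lem:SnEquivariant}) to transport the combinatorial adjacency structure of $M_n$ along the $\mathfrak{S}_n$-action on the bases $\mathcal{B}_1$ and $\mathcal{B}_2$. The key point is that each $\sigma \in \mathfrak{S}_n$ acts on $\mathcal{B}_1$ (resp. $\mathcal{B}_2$) by a signed permutation: for $b \in \mathcal{B}_1$, either $\sigma(b) \in \mathcal{B}_1$ or $-\sigma(b) \in \mathcal{B}_1$, and we write $|\sigma(b)|$ for the representative lying in $\mathcal{B}_1$; the same holds for $\mathcal{B}_2$. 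Thus, writing $\sigma$ as the linear map on the ambient spaces, the matrix of $b \mapsto |\sigma(b)|$ is a genuine permutation matrix up to signs of rows and columns.

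First I would show that $\sigma$ preserves adjacency. Fix $a \in \mathcal{B}_1$ and $b \in \mathcal{B}_2$. Expanding $(\per_n)_V^{\wedge(1,2,\cdots,n-2)}(a)$ in the basis $\mathcal{B}_2$, the coefficient of $b$ is some scalar $c_{a,b}$. By \Cref{lem:SnEquivariant}, $(\per_n)_V^{\wedge(1,2,\cdots,n-2)}(\sigma(a)) = \sigma\big((\per_n)_V^{\wedge(1,2,\cdots,n-2)}(a)\big)$; applying $\sigma$ to the expansion sends each basis term $b$ of $\mathcal{B}_2$ to $\pm |\sigma(b)| \in \mathcal{B}_2$, so the coefficient of $|\sigma(b)|$ in $(\per_n)_V^{\wedge(1,2,\cdots,n-2)}(|\sigma(a)|)$ equals $\pm c_{a,b}$. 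Hence $c_{a,b} \neq 0$ if and only if the coefficient of $|\sigma(b)|$ in $(\per_n)_V^{\wedge(1,2,\cdots,n-2)}(|\sigma(a)|)$ is nonzero; that is, $a$ and $b$ are adjacent if and only if $|\sigma(a)|$ and $|\sigma(b)|$ are adjacent. In particular, $a$ has an adjacent element if and only if $\sigma(a)$ (equivalently $|\sigma(a)|$) does.

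Next I would bootstrap this to connectedness: a path $z_1, z_2, \ldots, z_m$ in $\mathcal{B}_1 \cup \mathcal{B}_2$ with consecutive terms adjacent maps under $b \mapsto |\sigma(b)|$ to the path $|\sigma(z_1)|, |\sigma(z_2)|, \ldots, |\sigma(z_m)|$, whose consecutive terms are again adjacent by the previous paragraph; since $b \mapsto |\sigma(b)|$ is a bijection of $\mathcal{B}_1 \cup \mathcal{B}_2$ with inverse $b \mapsto |\sigma^{-1}(b)|$, connectedness is preserved in both directions. Consequently the bijection carries the connected component containing $a$ onto the connected component containing $|\sigma(a)|$, matching rows to rows and columns to columns, and the nonzero entries are identical up to the fixed signs introduced on each row and column by $\sigma$; this is precisely the relation ``essentially same'' of \Cref{DefConnectedComponent}(v), giving $M_n(|\sigma(a)|)$ essentially same as $M_n(a)$. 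The final assertion is then immediate: $a, a' \in \mathcal{B}_1$ lie in a common connected component $\iff$ they are connected $\iff$ $|\sigma(a)|, |\sigma(a')|$ are connected $\iff$ they lie in a common connected component. The only mild subtlety — and the place one must be careful rather than any deep obstacle — is the bookkeeping of signs: one must check that the $\pm$ ambiguities in $\sigma(b) = \pm|\sigma(b)|$ are consistent enough that the induced map on matrices is exactly a row/column reordering with scalar multiples, which is what ``essentially same'' allows; this follows because the sign attached to a basis element depends only on the element and $\sigma$, not on the particular adjacency being examined.
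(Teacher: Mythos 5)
Your proposal is correct and follows essentially the same route as the paper's proof: both transport the adjacency/connectedness structure of $M_n$ through the signed permutation action of $\sigma$ on $\mathcal{B}_1\cup\mathcal{B}_2$, using the $\mathfrak{S}_n$-equivariance of Lemma \ref{lem:SnEquivariant} and invoking $\sigma^{-1}$ for the reverse direction, so that the component of $a$ is carried bijectively onto that of $|\sigma(a)|$ up to row/column reordering and signs. Your explicit remark that the sign depends only on the basis element and $\sigma$ (not on the particular entry) is exactly the bookkeeping implicit in the paper's conclusion that the two components are essentially the same.
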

\begin{proof}
	Let $a_1,\cdots,a_l\in\mathcal{B}_1$ and $b_1,\cdots,b_m\in\mathcal{B}_2$ be the elements contained in $M(a)$. Write each $(\per_n)^{\wedge(1,2,\cdots,n-2)}_V(a_i)$ as a linear combination of the elements in $\mathcal{B}_2$. By \Cref{lem:SnEquivariant}, applying $\sigma$ to the linear combinations, we get the elements
	$\sigma(b_1),\cdots,\sigma(b_m)\in\Lambda^2 V\otimes \Lambda^3 V \otimes \cdots\otimes \Lambda^{n-1}V\otimes V$ and the linear combinations of $\sigma(b_1),\cdots,\sigma(b_m)$ that represent $(\per_n)^{\wedge(1,2,\cdots,n-2)}_V(\sigma(a_i))$ for each $i$. Those linear combinations show that $|\sigma(a_1)|,\cdots,|\sigma(a_l)|\in\mathcal{B}_1$ and $|\sigma(b_1)|,\cdots,|\sigma(b_m)|\in\mathcal{B}_2$ are all connected. Also, the other elements in $\mathcal{B}_1$ and $\mathcal{B}_2$ are not connected to any of these, since if there exists such element $x$ that is connected to, for example, $|\sigma(a_1)|$, then there is a sequence of linear combinations that connects $x$ and $|\sigma(a_1)|$, and we get a contradiction by applying $\sigma^{-1}$ to these linear combinations. Therefore $|\sigma(a_1)|,\cdots,|\sigma(a_l)|\in\mathcal{B}_1$ and $|\sigma(b_1)|,\cdots,|\sigma(b_m)|\in\mathcal{B}_2$ form the connected component $M_n(|\sigma(a)|)$, and the rows and columns of $M_n(|\sigma(a)|)$ can be obtained by reordering the rows and columns of $M_n(a)$ and multiplying $-1$ to some rows and columns of $M_n(a)$. Hence $M_n(a)$ and $M_n(|\sigma(a)|)$ are essentially the same, and the last statement follows.
\end{proof}

This lemma motivates to define an equivalence relation $\sim$ on the set $\mathcal{B}_1'=\{a\in\mathcal{B}_1\mid a\text{ has an adjacent element}\}$ as follows: for $a,a'\in\mathcal{B}_1'$, we set $a\sim a'$ if and only if either there exists $\sigma\in\mathfrak{S}_n$ such that $a$ and $|\sigma(a')|$ are contained in the same connected component of $M$, where $M$ is as in Definition \ref{DefConnectedComponent}. Considering the equivalence classes with respect to the relation $\sim$, we obtain the following lemma.

\begin{lem}\label{lem:calcRank}
	Let $M_n$ be the matrix corresponding to $(\per_n)^{\wedge(1,2,\cdots,n-2)}_V$ with respect to the ordered bases $\mathcal{B}_1$ and $\mathcal{B}_2$ and $H_a=\{\sigma\in\mathfrak{S}_n\mid |\sigma(a)|\text{ is contained in }M_n(a)\}$. Then $H_a$ is a subgroup of $\mathfrak{S}_n$ and \[\rank (M_n)=\sum_{a}\frac{n!\rank(M_n(a))}{|H_a|}\] where the summation is taken over the representatives of the equivalence classes on $\mathcal{B}_1'$ with respect to $\sim$.
\end{lem}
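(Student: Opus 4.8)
The plan is to exploit the $\mathfrak{S}_n$-action established in \Cref{lem:SnEquivariant} and \Cref{lem:sigmaEssentiallySame} to partition the rank computation of $M_n$ into contributions from orbit representatives. First I would verify that $H_a$ is a subgroup of $\mathfrak{S}_n$: the identity clearly lies in $H_a$ since $|1(a)| = a \in M_n(a)$, and if $\sigma, \tau \in H_a$ then $|\sigma(a)|$ and $|\tau(a)|$ both lie in $M_n(a)$, so by the ``furthermore'' part of \Cref{lem:sigmaEssentiallySame} applied with the permutation $\sigma^{-1}$ (using $M_n(|\sigma^{-1}(x)|)$ essentially same as $M_n(x)$ and the last statement of that lemma), one deduces $|\sigma^{-1}\tau(a)| \in M_n(a)$, giving closure under composition and inverses. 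Concretely, the relevant fact is: $|\sigma(a)|$ is in the same connected component as $a$ iff $|\sigma\tau(a)|$ is in the same component as $|\tau(a)|$, which follows by applying \Cref{lem:sigmaEssentiallySame} with the single permutation $\tau$ to the pair $\{a, |\sigma(a)|\}$.

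Next, for a fixed representative $a$ of an equivalence class, I would count how many distinct connected components arise as $M_n(|\sigma(a)|)$ when $\sigma$ ranges over $\mathfrak{S}_n$. By the last statement of \Cref{lem:sigmaEssentiallySame}, $M_n(|\sigma(a)|) = M_n(|\tau(a)|)$ (as the \emph{same} submatrix, i.e. same underlying set of rows and columns) precisely when $|\sigma(a)|$ and $|\tau(a)|$ lie in the same connected component, which by the subgroup property happens iff $\sigma H_a = \tau H_a$. Hence the components $\{M_n(|\sigma(a)|) : \sigma \in \mathfrak{S}_n\}$ are indexed by the left cosets $\mathfrak{S}_n / H_a$, so there are exactly $n!/|H_a|$ of them. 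Each such component is essentially the same as $M_n(a)$ by \Cref{lem:sigmaEssentiallySame}, and since rank is invariant under reordering rows/columns and rescaling by nonzero scalars, each contributes $\rank(M_n(a))$ to $\rank(M_n)$.

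Finally I would assemble these pieces. The connected components of $M_n$ partition its nonzero rows and columns, and distinct connected components occupy disjoint sets of rows and columns, so $\rank(M_n)$ is the sum of the ranks of all connected components (rows and columns not contained in any component contribute zero and may be ignored). The equivalence relation $\sim$ groups the elements of $\mathcal{B}_1'$ so that two elements are equivalent exactly when their connected components are related by some $\sigma \in \mathfrak{S}_n$; therefore the set of all connected components is the disjoint union, over representatives $a$ of the $\sim$-classes, of the families $\{M_n(|\sigma(a)|) : \sigma \in \mathfrak{S}_n\}$, each of size $n!/|H_a|$ with common rank $\rank(M_n(a))$. Summing yields
\[
\rank(M_n) = \sum_a \frac{n!\,\rank(M_n(a))}{|H_a|},
\]
as claimed. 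The main obstacle is the bookkeeping needed to ensure that as $a$ runs over $\sim$-representatives and $\sigma$ over cosets $\mathfrak{S}_n/H_a$, the submatrices $M_n(|\sigma(a)|)$ are pairwise distinct \emph{and} exhaust all connected components exactly once; this requires carefully combining the ``furthermore'' clause of \Cref{lem:sigmaEssentiallySame} with the definition of $\sim$, and is where one must be most careful not to double-count.
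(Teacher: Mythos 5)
Your proposal is correct and follows essentially the same route as the paper: it uses \Cref{lem:sigmaEssentiallySame} to establish the subgroup property of $H_a$ and to identify the distinct components $M_n(|\sigma(a)|)$ with the left cosets of $H_a$, then sums the ranks of these essentially-same components over the $\sim$-classes. You merely spell out more explicitly the final bookkeeping (rank of $M_n$ as the sum over connected components, exhaustion and non-double-counting), which the paper leaves implicit.
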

\begin{proof}
	Fix $a\in \mathcal{B}_1'$. For any $\sigma,\tau\in\mathfrak{S}_n$, it holds that $M_n(a)=M_n(|\sigma(a)|)$ if and only if $M_n(|\tau(a)|)=M_n(|\tau(\sigma(a))|)$ by \Cref{lem:sigmaEssentiallySame}.
	Hence for $\sigma_1,\sigma_2\in H_a$, we get $M_n(a)=M_n(|\sigma_1^{-1}\sigma_2(a)|)$ from $M_n(|\sigma_1(a)|)=M_n(|\sigma_2(a)|)$. Together with $\id\in H_a$, we get $H_a\leq \mathfrak{S}_n$ and the distinct connected components in $\{M_n(|\sigma(a)|)\}_{\sigma\in\mathfrak{S}_n}$ correspond to the left cosets of $H_a$ in $\mathfrak{S}_n$.
	By \Cref{lem:sigmaEssentiallySame}, those connected components are essentially the same as $M_n(a)$, so they have the same rank.
\end{proof}

Then we compute the rank of $(\per_n)^{\wedge(1,2,\cdots,n-2)}_V$ up to $n=7$ with this lemma and conclude the following.

\begin{thm}\label{thm:PermLower}
	When $\cha (\k)=0$, the following hold:
	\begin{itemize}
		\setlength\itemsep{0.5em}
		\item $\rk(\per_4)=\brk(\per_4)=8$;
		\item $\brk(\per_5)\geq 15$;
		\item $\brk(\per_6)\geq 29$;
		\item $\brk(\per_7)\geq 55$.
	\end{itemize}
\end{thm}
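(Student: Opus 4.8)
The plan is to compute the rank of the square matrix $M_n$ representing $(\per_n)^{\wedge(1,2,\ldots,n-2)}_V$ for $n = 4, 5, 6, 7$, and to apply the recursive Koszul flattening bound \eqref{eqRKFresult}, which gives
\[
\brk(\per_n) \geq \left\lceil \frac{\rank(M_n)}{\prod_{i=1}^{n-2}\binom{n-1}{i}} \right\rceil.
\]
For $n = 4$ the denominator is $\binom{3}{1}\binom{3}{2} = 9$, for $n = 5$ it is $\binom{4}{1}\binom{4}{2}\binom{4}{3} = 4 \cdot 6 \cdot 4 = 96$, and so on; one checks that the claimed lower bounds $8, 15, 29, 55$ follow once $\rank(M_n)$ is known to be at least $64+k$, $1344 + k$, etc., for the appropriate small $k$. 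The upper bound $\rk(\per_4) \leq 8$ needed to pin down $\rk(\per_4) = 8$ comes from a known rank-$8$ decomposition of $\per_4$ (for instance the one used in \cite{MR3316987}, or directly a Ryser-type expansion giving $2^{n-1} = 8$ terms), so the content of the theorem is entirely in the lower bounds, i.e.\ in certifying that $\rank(M_n)$ is large enough.

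First I would set up $M_n$ explicitly in the ordered bases $\mathcal{B}_1, \mathcal{B}_2$ fixed in Section~\ref{Sect:Permanent Tensors}; by the defining formula \eqref{eqRKFdefined} applied to $\per_n = \sum_{\sigma}e_{\sigma(1)}\otimes\cdots\otimes e_{\sigma(n)}$, every nonzero entry of $M_n$ is $\pm 1$, and $M_n$ is sparse. For $n = 4$ the matrix is $96 \times 96$ and its rank can be computed directly; the excerpt already records (in the determinant case) that such computations are feasible, and for the permanent this is a routine exact linear-algebra computation over $\mathbb{Q}$. The main obstacle is $n = 6$ and especially $n = 7$, where $M_n$ has size on the order of $26{,}471{,}025 \times 26{,}471{,}025$ and a naive rank computation is infeasible. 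This is exactly what Lemmas~\ref{lem:SnEquivariant}, \ref{lem:sigmaEssentiallySame}, and \ref{lem:calcRank} are for: $(\per_n)^{\wedge(1,\ldots,n-2)}_V$ is $\mathfrak{S}_n$-equivariant, so $M_n$ decomposes into connected components that are permuted by $\mathfrak{S}_n$ and are essentially the same within each $\mathfrak{S}_n$-orbit. By Lemma~\ref{lem:calcRank},
\[
\rank(M_n) = \sum_{a} \frac{n!\,\rank(M_n(a))}{|H_a|},
\]
the sum over representatives $a$ of the $\sim$-classes in $\mathcal{B}_1'$. So I would enumerate the connected components, group them into $\mathfrak{S}_n$-orbits, pick one representative component per orbit, compute its (much smaller) rank and the order of its stabilizer subgroup $H_a$, and assemble $\rank(M_n)$ from these pieces.

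The steps, in order: (1) generate the sparse matrix $M_n$ from \eqref{eqRKFdefined}; (2) find its connected components via a graph traversal on the bipartite adjacency structure of Definition~\ref{DefConnectedComponent}; (3) act by $\mathfrak{S}_n$ on component representatives to partition components into orbits, using Lemma~\ref{lem:sigmaEssentiallySame} to guarantee essential sameness within an orbit; (4) for each orbit representative $a$, compute $\rank(M_n(a))$ by exact Gaussian elimination and $|H_a|$ as a subgroup order; (5) sum as in Lemma~\ref{lem:calcRank} to get $\rank(M_n)$, and divide by $\prod_{i=1}^{n-2}\binom{n-1}{i}$ and take the ceiling. I expect the hardest practical step to be step (4) for $n = 7$: even after the $\mathfrak{S}_7$-reduction the representative components may still be sizable, so one may need exact rank over a prime field $\mathbb{F}_p$ (Gaussian elimination mod a large prime), which only lowers the rank and hence still yields a valid lower bound on $\brk(\per_7)$ over characteristic $0$. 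All of these computations are carried out in the accompanying code \cite{hanjukim2024appendix}, and the theorem follows by plugging the resulting ranks into \eqref{eqRKFresult}; for $n = 4$ the equality $\rk(\per_4) = \brk(\per_4) = 8$ then follows by combining the lower bound $\brk(\per_4) \geq 8$ with the rank-$8$ decomposition.
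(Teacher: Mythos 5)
Your proposal follows essentially the same route as the paper: compute $\rank(M_n)$ for the recursive Koszul flattening $(\per_n)^{\wedge(1,2,\ldots,n-2)}_V$, exploit the $\mathfrak{S}_n$-equivariance and the connected-component decomposition of Lemma~\ref{lem:calcRank} to make the $n=6,7$ computations feasible, divide by $\prod_{i=1}^{n-2}\binom{n-1}{i}$ as in \eqref{eqRKFresult}, and combine the resulting bound $\brk(\per_4)\geq 8$ with the $2^{n-1}$-term decomposition (due to Glynn \cite{MR2673027}, which is the correct reference rather than \cite{MR3316987}) to get $\rk(\per_4)=\brk(\per_4)=8$; this matches the paper's proof, so no further comparison is needed.
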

\begin{proof}
	Note that we have the upper bound $\brk(\per_4)\leq \rk(\per_4)\leq 8$ due to D. G. Glynn \cite{MR2673027}.
	We implemented an algorithm that uses \Cref{lem:calcRank} to calculate the rank of the matrix $M_n$ corresponding to $(\per_n)^{\wedge(1,2,\cdots,n-2)}_V$ with respect to the ordered bases $\mathcal{B}_1$ and $\mathcal{B}_2$ and obtained the following result.
	\[\begin{array}{c|c|c|c}
		n & \text{size of }M_n & \rank (M_n) & \text{lower bound of }\brk(\per_n)\\ \hline
		4 & 96\times 96 & 70 & 8\\
		5 & 2500\times 2500 & 1426 & 15\\
		6 & 162000\times 162000 & 70692 & 29\\
		7 & 26471025\times 26471025 & 8763494 & 55
	\end{array}\]
	The implementation can be found in \cite{hanjukim2024appendix}.
\end{proof}

\begin{thm}\label{thm:perFiniteChar}
	When $\cha(\k)\neq 2$, we have $\rk(\per_4)=\brk(\per_4)=8$ and $\brk(\per_5)\geq 15$.
\end{thm}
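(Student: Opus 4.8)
The plan is to transplant the argument behind \Cref{thm:det4FiniteChar} and \Cref{thm:TensorRankDet4} to the permanent. Write $M_n(\k)$ for the matrix of the recursive Koszul flattening $(\per_n)^{\wedge(1,2,\cdots,n-2)}_V$ with respect to the ordered bases $\mathcal{B}_1$, $\mathcal{B}_2$. As in the determinantal case, each entry of $M_n(\k)$ lies in $\{0,1,-1\}$ and the positions of the $\pm 1$'s do not depend on $\k$: for a fixed target element of $\mathcal{B}_2$, the defining formula \eqref{eqRKFdefined} shows that at most one permutation $\sigma\in\mathfrak{S}_n$ contributes to it. Hence $M_n(\k)$ is the reduction along $\z\to\k$ of a fixed integer matrix $M_n(\z)$, and $\operatorname{rank}_\k M_n(\k)$ is the largest $r$ such that some $r\times r$ minor of $M_n(\z)$ is not divisible by $\cha(\k)$.

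For the lower bounds I would run the rank method (\Cref{MainPropRecursive}) with $(p_1,\ldots,p_{n-2})=(1,2,\ldots,n-2)$, so that the relevant normalizing factor is $\prod_{i=1}^{n-2}\binom{n-1}{i}$, which equals $9$ for $n=4$ and $96$ for $n=5$. Thus it suffices to produce, for $n=4$, a square submatrix of $M_4(\z)$ of size $r\ge 64$ whose determinant is $\pm 2^{a}$ (so a unit in every field of characteristic $\ne 2$), yielding $\brk(\per_4)\ge\lceil r/9\rceil=8$; and, for $n=5$, a square submatrix of size $r\ge 1345$ whose determinant is coprime to $\cha(\k)$, yielding $\brk(\per_5)\ge\lceil r/96\rceil=15$. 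Since the full $\mathbb{Q}$-ranks are $70$ and $1426$ respectively (\Cref{thm:PermLower}), one expects to take $r$ equal to these full ranks; I would locate such minors and certify their determinants by exact integer arithmetic, using the block decomposition of $M_n$ into connected components and the $\mathfrak{S}_n$-symmetry of Lemmas~\ref{lem:SnEquivariant}--\ref{lem:calcRank} to keep the computation tractable, recording the implementation in \cite{hanjukim2024appendix}. For $\per_4$ a single power-of-two minor handles all $\cha(\k)\neq2$ at once; for $\per_5$ the characteristic $3$ case may need its own minor, exactly as in the $\det_4$, $\cha(\k)=3$ part of \Cref{thm:TensorRankDet4}.

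For the matching upper bound I would quote Glynn's identity \cite{MR2673027}, which writes $\per_n$ as $2^{-(n-1)}$ times a sum of $2^{n-1}$ decomposable tensors; its only denominator is a power of $2$, so it is valid over any field with $\cha(\k)\neq2$ and gives $\rk(\per_4)\le 8$. Combining this with $\brk(\per_4)\ge 8$ and $\brk\le\rk$ gives $\rk(\per_4)=\brk(\per_4)=8$ as claimed.

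The main obstacle is the uniformity in the characteristic. In \Cref{thm:PermLower} it is enough to compute one rank over $\mathbb{Q}$, whereas here one must guarantee that $\operatorname{rank}_\k M_n(\k)$ does not drop below the stated threshold for all odd $p$ simultaneously; the honest way to do this is the ``value of a large minor is, up to sign, a power of $2$'' phenomenon already seen for $\det_4$, and the task reduces to finding and verifying such a minor — together with one extra minor for $\per_5$ in characteristic $3$ — inside matrices of sizes $96\times 96$ and $2500\times 2500$. Making these exact-integer determinant computations feasible is precisely where the connected-component and $\mathfrak{S}_n$-equivariance reductions of the preceding lemmas are essential; a minor additional point is to confirm that Glynn's decomposition survives in odd characteristic, which it does because its denominators are powers of $2$.
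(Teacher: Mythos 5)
Your proposal is correct and follows essentially the same route as the paper: reduce to positive characteristic via \Cref{thm:PermLower}, use the integrality of $M_n$ to find large square submatrices whose determinants are units modulo $\cha(\k)$, apply the rank method with normalizing factors $9$ and $96$, and invoke Glynn's decomposition for the upper bound $\rk(\per_4)\le 8$. The only cosmetic difference is that the paper's computed minors (a $70\times 70$ one of determinant $1$ and a $1426\times 1426$ one of determinant $-1$) are units in every characteristic, so no separate minor for characteristic $3$ is needed as you anticipated.
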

\begin{proof}
	Due to \Cref{thm:PermLower}, we only need to consider when $\cha(\k)=p>0$.
	Let $M_n(\mathbb{K})$ be the matrix over $\mathbb{K}$ corresponding to $(\per_n)^{\wedge(1,2,\cdots,n-2)}_V$ with respect to the ordered bases $\mathcal{B}_1$ and $\mathcal{B}_2$
	As before, it holds that $\det (M_n(\k))\equiv\det (M_n(\q))\pmod p$. 
	Also, one can check there is a $70\times 70$ submatrix of $M_4(\q)$ with the determinant 1. Hence $\det (M_4(\k))\neq 0$ for any field $\k$. Similarly, one can check there is a $1426\times 1426$ submatrix of $M_5(\q)$ with the determinant $-1$ so that $\det (M_5(\k))\neq 0$ for any field $\k$. The implementation can be found in \cite{hanjukim2024appendix}.
\end{proof}

Note that Comon's conjecture \cite{MR2447451} predicts that $\rk(\per_n)$ is the same as the Waring rank of the monomial $x_1 \cdots x_n$, which is exactly $2^{n-1}$ when $\k$ is algebraically closed with $\cha({\k})=0$ \cite[Proposition 3.1]{MR2966824}, $\k=\mathbb{R}$ \cite[Theorem 3.5]{MR3648510}, or $\k=\mathbb{Q}$ \cite[Corollary 3.24]{MR4461573}. In addition, when $\cha({\k})\neq 2$, it is proved that the prediction is true for $n=3$ by \cite[Corollary 1.14]{MR3987583} and for $n=4$ by Theorem \ref{thm:perFiniteChar}. Hence it is natural to ask the following question.

\begin{question}
	Determine whether the equality $\rk(\per_n)=2^{n-1}$ holds for each $n \ge 5$.
\end{question}

\bibliography{ref.bib}
\vspace{0.5cm}

\end{document}